\documentclass[12pt]{article}

\usepackage{amsmath,amsfonts,amssymb,amsthm,graphicx,cite}
\usepackage{mathrsfs}
\usepackage{hyperref}

\numberwithin{equation}{section}

\setlength{\textwidth}{16 cm}
\setlength{\textheight}{24 cm}
\addtolength{\topmargin}{-2.5 cm}
\addtolength{\oddsidemargin}{-1.3 cm}
\addtolength{\evensidemargin}{-1.3 cm}

\setcounter{tocdepth}{1}

\begin{document}

\newtheorem{theorem}{Theorem}[section]
\newtheorem{corollary}[theorem]{Corollary}
\newtheorem{lemma}[theorem]{Lemma}
\newtheorem{proposition}[theorem]{Proposition}

\newcommand{\be}{\begin{equation}}
\newcommand{\ee}{\end{equation}}
\newcommand{\De}{\Delta}
\newcommand{\de}{\delta}
\newcommand{\eps}{\epsilon}
\newcommand{\Z}{{\mathbb Z}}
\newcommand{\N}{{\mathbb N}}
\newcommand{\C}{{\mathbb C}}
\newcommand{\Cs}{{\mathbb C}^{*}}
\newcommand{\R}{{\mathbb R}}
\newcommand{\Q}{{\mathbb Q}}
\newcommand{\T}{{\mathbb T}}
\newcommand{\re}{{\rm Re}\, }
\newcommand{\im}{{\rm Im}\, }
\newcommand{\cC}{{\cal C}}
\newcommand{\cS}{{\cal S}}

\title{Baker--Akhiezer specialisation of joint eigenfunctions for hyperbolic relativistic Calogero--Moser Hamiltonians}

\author{Martin Halln\"as\footnote{E-mail: hallnas@chalmers.se} \\ Department of Mathematical Sciences \\ Chalmers University of Technology and the University of Gothenburg\\ SE-412 96 Gothenburg, Sweden}

\date{\today}

\maketitle

\begin{abstract}
In earlier joint work with Ruijsenaars, we constructed and studied symmetric joint eigenfunctions $J_N$ for quantum Hamiltonians of the hyperbolic relativistic $N$-particle Calogero--Moser system. For generic coupling values, they are non-elementary functions that in the $N=2$ case essentially amount to a `relativistic' generalisation of the conical function specialisation of the Gauss hypergeometric function ${}_2F_1$. In this paper, we consider a discrete set of coupling values for which the solution to the joint eigenvalue problem is known to be given by functions $\psi_N$ of Baker--Akhiezer type, which are elementary, but highly nontrivial, functions. Specifically, we show that $J_N$ essentially amounts to the antisymmetrisation of $\psi_N$ and, as a byproduct, we obtain a recursive construction of $\psi_N$ in terms of an iterated residue formula.
\end{abstract}

\section{Introduction}
Relativistic generalizations of $N$-particle Calogero--Moser systems were originally conceived by Ruijsenaars to provide integrable quantum mechanical descriptions of relativistic quantum field theories in 1+1 spacetime dimensions, such as the quantum sine-Gordon theory, restricted to a $N$-particle sector \cite{RS86,Rui01}.
For this purpose, the relativistic system of hyperbolic type, given by the formally self-adjoint and pairwise commutating analytic difference operators (A$\De$Os)
\be
\label{Sr}
S_r(g;x)\equiv \sum_{\substack{I\subset\lbrace 1,\ldots,N\rbrace\\ |I|=r}}\prod_{\substack{j\in I\\ k\notin I}}f_-(g;x_j-x_k)\prod_{l\in I}\exp(-i\hbar \beta\partial_{x_l})\prod_{\substack{j\in I\\ k\notin I}}f_+(g;x_j-x_k),
\ee
where
\be
f_{\pm}(g;z)= \left(\frac{\sinh(\mu(z\pm i\beta g)/2)}{\sinh(\mu z/2)}\right)^{1/2},
\ee
is of particular importance.
Here, $r=1,\ldots,N$, it is natural to view $\beta>0$ as $1/mc$, with $m$ the particle mass and $c$ the speed of light, and in the nonrelativistic limit $c\to\infty$ pairwise commuting Hamiltonians of the nonrelativistic hyperbolic Calogero--Moser system are recovered; see~e.g.~the surveys \cite{Rui99,Hal25} and references therein.

In a series of joint papers with Ruijsenaars \cite{HR14,HR18a,HR21}, we developed a recursive scheme producing explicit symmetric joint eigenfunctions of the A$\De$Os \eqref{Sr}. In addition to various analyticity and invariance properties, we deduced an explicit formula for dominant asymptotics deep in a Weyl chamber and thereby proved that particles in the hyperbolic relativistic Calogero--Moser system exhibit soliton scattering (i.e.~conservation of momenta and factorization of the scattering matrix).

Further fundamental properties of the joint eigenfunctions were obtained by Belousov et al.~in a number of recent papers \cite{BDKK24a,BDKK24b,BDKK24c,BDKK24d}. Their results include integral equations, a reflection symmetry of the coupling constant, self-duality under interchange of geometric and spectral variables as well as orthogonality and completeness relations.

On a formal level, Ruijsenaars' hyperbolic A$\De$Os \eqref{Sr} are closely related to Macdonald's q-difference operators
\be
\label{Dr}
D^r_N(z;q,t)\equiv t^{r(r-1)/2}\sum_{\substack{I\subset\lbrace 1,\ldots,N\rbrace\\ |I|=r}}\prod_{\substack{j\in I\\ k\notin I}}\frac{tx_j-x_k}{x_j-x_k}\prod_{l\in I}T_{q,x_l},
\ee
where $(T_{q,x_l}f)(x_1,\ldots,x_k,\ldots,x_N)=f(x_1,\ldots,qx_l,\ldots,x_N)$. (The precise relationship can be gleaned from \eqref{Ars} and \eqref{DrN}.) They act on the space of symmetric polynomials in $N$ variables, on which they are simultaneously diagonalised by the symmetric $GL_N$ type Macdonald polynomials; see e.g.~\cite{Mac95,Nou23,Sto21}. For parameter values of the form $t=q^m$, $m\in\Z$, Etingof and Styrkas \cite{ES98} and Chalykh \cite{Cha02} constructed and studied non-symmetric joint eigenfunctions of Baker--Akhiezer (BA) type and obtained, in particular, a generalized Weyl character formula for the Macdonald polynomials, first conjectured by Felder and Varchenko \cite{FV97}, where the BA-function replaces the exponential function; see also Chalykh and Etingof \cite{CE13} for further related results and references.

In this paper, we restrict attention to the discrete set of coupling values $g=m\hbar$, $m\in\mathbb{Z}_+$, and show in Thm.~\ref{Thm:PhiN} that the joint eigenfunctions from \cite{HR14} of Ruijsenaars' A$\De$Os $S_r(m\hbar;x)$ are obtained by antisymmetrization of the (self-dual) BA-function associated with the Macdonald operators $D^r_N(z;q^2,q^{-2m})$. As a by-product of our proof, we obtain in Prop.~\ref{Prop:psiN} a recursive construction of the BA-function by an iterated residue formula.

For the coupling values $g=m\hbar$ under consideration, our main result provides an expansion of symmetric joint $S_r$-eigenfunctions in terms of eigenfunctions that are `asymptotically free' deep inside Weyl chambers $x_{\sigma(1)}>x_{\sigma(2)}>\cdots>x_{\sigma(N)}$, $\sigma\in S_N$. In the case of the Macdonald operators and generic parameter values, such asymptotically free solutions, often referred to as (q-)Harisch--Chandra series, have been studied in detail by Letzter and Stokman \cite{LS08}, van Meer and Stokman \cite{vMS10} as well as Noumi and Shiraishi \cite{NS12}; and Stokman \cite{Sto14} derived a corresponding ($c$-function) expansion of Cherednik's basic hypergeometric function associated to root systems. We should also note that Bullimore et al.~\cite{BKK15} showed that the partition function of the superconformal field theory $T[U(N)]$, as introduced by Gaiotto and Witten \cite{GW09}, displays the very same recursive structure as the joint $S_r$-eigenfunctions from \cite{HR14}; and, identifying integration with summing residues at all (infinitely many) poles above the real axis, they obtained an expression in terms of series of an asympototically free type. Furthermore, in the $N=2$ case, di Francesco et al.~\cite{FKKSS24} recently obtained a number of interesting results on the $S_r$-eigenfunctions, including a similar residue computation where the relevant series are identified as (q-)Harish--Chandra series. To establish the precise connection between these expansion results and the recursive construction in \cite{HR14} is, I believe, an interesting problem to which I plan to return to elsewhere. In the $BC_1$ case, the analogous connection between integral representations and series expansions was established by van de Bult et al.~\cite{vdBRS07}.

The remainder of the paper is structured as follows: In Section 2, we briefly recall definitions and key properties pertaining to joint eigenfunctions of the Ruijsenaars operators \eqref{Sr} and BA-(eigen)functions for the Macdonald operators \eqref{Dr}; and, in Section 3, we give the precise formulations and proofs of our results.

\section{Preliminaries}

\subsection{Joint eigenfunctions} 
To begin with, we reparametrize the two length scales of the A$\De$Os \eqref{Sr} as
\be
a_+\equiv 2\pi/\mu > 0,\quad \mathrm{(imaginary~period/interaction~length)}
\ee
and
\be
a_-\equiv \hbar\beta > 0,\quad \mathrm{(shift~step~size/Compton~wavelength)}
\ee
and replace the coupling $g$ by the parameter
\be
b\equiv \beta g.
\ee
Rewriting \eqref{Sr} in terms of the parameters $a_\pm$, the coefficients become manifestly $ia_+$-periodic. It follows that the A$\De$Os obtained after the interchange $a_+\leftrightarrow a_-$ commute with the given ones. In this way, we obtain $2N$ pairwise commuting A$\De$Os $H_{r,\de}(b;x)$, with $r=1,\ldots,N$ and $\delta=+,-$, and where $H_{r,+}(b;x)=S_r(g;x)$ with $b$ and $g$ related as above; cf.~Eq.~(1.7) in \cite{HR14}.

It is often convenient to work with similarity transforms of the A$\De$Os $H_{r,\de}$ by the weight function
\be\label{W}
W_N(b;x)\equiv \prod_{1\leq j<k\leq N}\prod_{\de=+,-}\frac{G(\de(x_j-x_k)+i(a_++a_-)/2)}{G(\de(x_j-x_k)+i(a_++a_-)/2-ib)},
\ee
where $G(z)\equiv G(a_+,a_-;z)$ is the hyperbolic Gamma function from \cite{Rui97}, which is $a_+\leftrightarrow a_-$ invariant, meromorphic in $z$ and satisfies the analytic difference equations
\be\label{Gades}
\frac{G(z+ia_\de/2)}{G(z-ia_\de/2)} = 2\cosh(\pi z/a_{-\de}),\ \ \ \de = +,-;
\ee
see loc.~cit.~Prop.~III.1 and Prop.~III.2. (Unless needed, we suppress dependence of $G$ and functions constructed from $G$ on the parameters $a_\pm$.) Indeed, using these difference equations, it is readily verified that
\be\label{Ars}
\begin{split}
A_{r,\delta}(b;x) &\equiv W_N(b;x)^{-1/2}H_{r,\de}(b;x)W_N(b;x)^{1/2}\\
&= \sum_{\substack{I\subset\lbrace 1,\ldots,N\rbrace\\ |I|=r}}\prod_{\substack{j\in I\\ k\notin I}}\frac{\sinh(\pi(x_j-x_k-ib)/a_\de)}{\sinh(\pi(x_j-x_k)/a_\de)}\prod_{l\in I}\exp(- ia_{-\delta}\partial_{x_l}),
\end{split}
\ee
which, in contrast to $H_{r,\de}$, act on the space of meromorphic functions. (We recall that, for $(a_+,a_-,b)\in(0,\infty)^3$ such that $b<a_++a_-$ and $x\in\R^N$, the weight function is regular and positive.)

In \cite{HR14} functions $J_N(b;x,y)$ having the joint eigenfunction property
\be
\label{JNEq}
A_{r,\de}(b;x)J_N(b;x,y) = J_N(b;x,y)\sum_{1\leq j_1<\cdots <j_r\leq N}e^{2\pi (y_{j_1}+\cdots +y_{j_r})/a_\de},
\ee
are constructed recursively by an explicit integral formula, with integrand built from $J_{N-1}$, the weight function $W_{N-1}$ as well as the kernel function
\be\label{cSf}
\cS_N^{\sharp}(b;x,y) \equiv \prod_{j=1}^N\prod_{k=1}^{N-1}\frac{G(x_j-y_k-ib/2)}{G(x_j-y_k+ib/2)}.
\ee
More precisely, with $J_1(x,y)\equiv\exp(2\pi i xy/a_+a_-)$ as the starting point for the recursion, $J_N$, $N>1$, is given by
\begin{multline}\label{JN}
J_N(b;x,y) = \frac{\exp\left(\frac{2\pi i}{a_+a_-}y_N\sum_{j=1}^Nx_j\right)}{(N-1)!}\\
\cdot \int_{\R^{N-1}}dzW_{N-1}(b;z)\cS^\sharp_N(b;x,z)J_{N-1}(b;z,(y_1-y_N,\ldots,y_{N-1}-y_N)).
\end{multline}

From Prop.~III.2 in \cite{Rui97}, we recall that the hyperbolic gamma function is scale invariant, in the sense that
\be\label{sc}
G(\lambda a_+,\lambda a_-;\lambda z) = G(a_+,a_-;z),\quad \lambda\in(0,\infty).
\ee
Since $J_N$ is constructed almost entirely from $G(z)$, it is easily seen to have the invariance properties
\be
J_N(a_+,a_-,b;x,y) = J_N(a_-,a_+,b;x,y),
\ee
\be
J_N(\lambda a_+,\lambda a_-,\lambda b;\lambda x,\lambda y) = \lambda^{N(N-1)/2}J_N(a_+,a_-,b;x,y).
\ee
Hence, we may and shall restrict attention to 
\be
\label{apmSpec}
a_+ = 1,\ \ \ a_-\equiv a\geq 1,
\ee
without any loss of generality. Moreover, to facilitate the comparison with the pertinent BA-function, it is expedient to renormalize $J_N$ and introduce the function
\be\label{PhiN}
\Phi_N(a,b;x,y)\equiv \left(\frac{G(1,a;ib-i(1+a)/2)}{\sqrt{a}}\right)^{N-1}J_N(1,a,b;x,y),
\ee
which, in particular, satisfies the simple duality relation
\be
\label{dualRel}
\Phi_N(a,b;x,y) = \Phi_N(a,1+a-b;y,x),
\ee
as conjectured in \cite{HR14}, verified for $N\leq 3$ in \cite{HR18a} and proven for arbitrary $N$ in \cite{BDKK24b} (see Thm.~5 and Eqs.~(1.53)--(1.54)).

\subsection{Baker--Akhiezer functions}
Take $q\in\mathbb{C}^\times\equiv\C\setminus\{0\}$, $m\in\mathbb{Z}_+$ and let $\psi_N$ be a function of $x,y\in\mathbb{C}^N$ that is of the form
\be
\label{psiN}
\psi_N(x,y) = q^{2(x,y)}\sum_\nu \psi_{N,\nu}(x)q^{2(\nu,y)},
\ee
where the sum extends over weight vectors
\be\label{nu}
\nu = \sum_{1\leq j<k\leq N}\left(\frac{m}{2}-l_{jk}\right)(e_j-e_k),\ \ \ l_{jk} = 0,\ldots,m,
\ee
with $\{e_1,\ldots,e_N\}$ the standard ON-basis in $\mathbb{R}^N$. Suppose, in addition, that $\psi_N$ satisfies the vanishing condition
\be
\label{psiNqinv}
\psi_N(x,y+s(e_j-e_k)/2) - \psi_N(x,y-s(e_j-e_k)/2) = 0\ \ \text{when}\ \ q^{2(y_j-y_k)} = 1,
\ee
for all $1\leq j<k\leq N$ and $s=1,\ldots,m$. Then it is called a Baker--Akhiezer (BA) function associated with the root system $A_{N-1}$ and (positive) integer parameter $m$. Existence is established by different constructions in Sect.~5 of \cite{ES98} and Sect.~3.2 of \cite{Cha02}. We note that the iterated residue formula obtained in this paper yields an additional (constructive) existence proof.

Assuming $q$ is not a root of unity, we know from \cite{ES98,Cha02} that a function $\psi_N$ with the above properties is unique up to a choice of normalisation; and that, imposing the normalisation condition
\be
\label{psiNrhom}
\psi_{N,\rho_N}(x) = \prod_{1\leq j<k\leq N}\prod_{n=1}^m[n+x_k-x_j],\ \ \ [z]\equiv q^z-q^{-z},
\ee
yields a self-dual BA function, in the sense that $\psi_N(x,y)=\psi_N(y,x)$. Here, we have used the standard notation
\be
\label{rhoNm}
\rho_N\equiv \rho_N(m) = \frac{m}{2}\sum_{1\leq j<k\leq N}(e_j-e_k) = \frac{m}{2}\sum_{j=1}^N(N-2j+1)e_j,
\ee
obtained by setting all $l_{ij}=0$ in \eqref{nu}. The self-dual BA-function satisfies the bispectral system of $q$-difference equations
\begin{align}
\label{MacDEqs}
D^r_N(q^{2y};q^2,q^{-2m})\psi(x,y) &= \psi(x,y)\sum_{1\leq j_1<\cdots <j_r\leq N}q^{2 (x_{j_1}+\cdots +x_{j_r})},\\
D^r_N(q^{2x};q^2,q^{-2m})\psi(x,y) &= \psi(x,y)\sum_{1\leq j_1<\cdots <j_r\leq N}q^{2 (y_{j_1}+\cdots +y_{j_r})},
\end{align}
with $r=1,\ldots,N$, $q^{2x}\equiv (q^{2x_1},\ldots,q^{2x_N})$ and $q^{2y}\equiv (q^{2y_1},\ldots,q^{2y_N})$.

\section{Baker--Akhiezer specialisation}
When $b=m\in\mathbb{Z}_+$, the weight function $W_N$ \eqref{W} and kernel function $\cS^\sharp_N$ \eqref{cSf} reduce to elementary functions. More precisely, from the difference equation \eqref{Gades} satisfied by $G(z)$, we see that
\be
\label{cW}
W_N(1,a,m;x) = \prod_{1\leq j<k\leq N}\sinh\left(\frac{\pi}{a}(x_k-x_j)\right)\prod_{n=-m+1}^{m-1} 2 \sinh\left(\frac{\pi}{a}(x_k-x_j+in)\right)
\ee
and
\begin{equation}
\label{cK}
\cS^\sharp_N(1,a,m;x,y) = \prod_{j=1}^N\prod_{k=1}^{N-1}\prod_{n=0}^{m-1}\frac{1}{2\cosh\big(\frac{\pi}{a}(x_j-y_k+i(m-2n-1)/2)\big)}.
\end{equation}
Up to an overall numerical factor, $W_N(1,a,m;x)$ equals the specialisation $\De(q^{2ix};q^2,q^{2m})$ of Macdonald's weight function $\De$, while $\cS^\sharp_N(1,a,m;x,y)$ is obtained by a similar specialisation of his product function $\Pi$ after removing a factor depending only on $\sum_{j=1}^N(x_j-y_j)$; cf.~Eqs.~(9.2) and (2.5), respectively, in Section VI of \cite{Mac95}. Furthermore, \eqref{Gades} and the formula $G(1,a;i(1-a)/2)=1/\sqrt{a}$ (see Eq.~(3.38) in \cite{Rui97}) entail that
\be
\frac{G(1,a;im-i(1+a)/2)}{\sqrt{a}} = \frac{\prod_{n=1}^{m-1}2\sin(\pi n/a)}{a}.
\ee
Consequently, the $b=m$ specialisation of $\Phi_N$ is given by recursively by
\begin{multline}
\label{PhiNRec}
\Phi_N(m;x,y) = \frac{\prod_{n=1}^{m-1}2\sin(\pi n/a)}{a(N-1)!}
e^{\frac{2\pi i}{a}y_N(x_1+\cdots+x_N)}\\
\cdot \int_{\mathbb{R}^{N-1}}dz \prod_{1\leq j<k\leq N-1}\sinh\left(\frac{\pi}{a}(z_k-z_j)\right)\prod_{n=-m+1}^{m-1} 2 \sinh\left(\frac{\pi}{a}(z_k-z_j+in)\right)\\
\cdot \frac{\Phi_{N-1}(m;z,(y_1-y_N,\ldots,y_{N-1}-y_N))}{\prod_{j=1}^N\prod_{k=1}^{N-1}\prod_{n=0}^{m-1}2\cosh\big(\frac{\pi}{a}(x_j-z_k+i(m-2n-1)/2)\big)}
\end{multline}
with $\Phi_1(x,y)=\exp\left(\frac{2\pi}{a}xy\right)$. We note that this is effectively a basic hypergeometric integral, since the weight and kernel functions can be expressed in terms of the product
\be
\prod_{n=0}^{m-1}(q^{n+z}-q^{-n-z}) = (-1)^m q^{-m(m-1)/2-mz}\frac{(q^{2z};q^2)_\infty}{(q^{2(m+z)};q^2)_\infty},
\ee
with
\be
\label{q}
q = e^{-i\pi/a},\ \ \ a\in\mathbb{R}.
\ee
To be precise, while the (modified) $q$-Gamma function $(q^{2z};q^2)_\infty$ is only well-defined for $|q|<1$, the above ratio clearly extends to $|q|=1$. For accounts of (multivariable) basic hypergeometric integrals, see e.g.~the overview by Koornwinder and Stokman \cite{KS21} and the survey by Schlosser \cite{Sch21}.

Upon setting $(a_+,a_-,b)=(1,a,m)$, the $a$-antiperiodicity of the functions involved entails that the eigenvalue equations \eqref{JNEq}, which are are also satisfied by $\Phi_N$, become trivial for $\de=+$. As we now show, the remaining equations, corresponding to $\de=-$, are closely related to the Macdonald eigenvalue equations \eqref{MacDEqs}, of which the BA-functions are solutions.

Letting
\be
\label{deN}
\mathcal{\de}_N(x;m) = \prod_{1\leq j<k\leq N}\prod_{n=-m}^{m} [x_j-x_k+n],
\ee
we recall the well-known similarity transform
\be
\mathcal{\de}_N(x;m-1)^{-1}D^r_N(q^{2x};q^2,q^{-2(m-1)})\mathcal{\de}_N(x;m-1) = D^r_N(q^{2x};q^2,q^{2m}),
\ee
which is easily verified by a direct computation; and with the parametrisation \eqref{q} in effect it is readily seen that
\be
\label{DrN}
q^{mr(1-N)}D^r_N(q^{2ix};q^2,q^{2m}) = \sum_{\substack{I\subset\lbrace 1,\ldots,N\rbrace\\ |I|=r}}\prod_{\substack{j\in I\\ k\notin I}}\frac{\sinh\big(\frac{\pi}{a}(x_j-x_k-im)\big)}{\sinh\big(\frac{\pi}{a}(x_j-x_k)\big)}\prod_{l\in I}\exp(-i\partial_{x_l}).
\ee
Note that the right-hand side coincides with the A$\De$O $A_{r,-}(1,a,m;x)$ \eqref{Ars}.

In what follows, it will be important to keep track of dependence on the parameters $a$ or $q$ as well as $m$. Therefore, we write $\psi_N(q,m;x,y)$ for the self-dual BA-function characterised by \eqref{psiN}--\eqref{psiNrhom}. By the joint eigenfunction properties reviewed in the previous section, it is natural to expect that $\Phi_N(a,m;x,y)$ is proportional to the symmetrisation of $\mathcal{\de}_N(ix;m-1)^{-1}\psi_N(e^{-i\pi/a},m-1;ix,iy)$. Indeed, since both functions satisfy the same set of joint eigenvalue equations with respect to the $S_N$-invariant A$\De$Os \eqref{DrN}, it is plausible that $\Phi_N(a,m;x,y)$ can be expressed as a linear combination of the functions $\mathcal{\de}_N(ix;m-1)^{-1}\psi_N(e^{-i\pi/a},m-1;i\sigma(x),iy)$, $\sigma\in S_N$, where we have used the fact that
\be
\mathcal{\de}_N(i\sigma(x);m-1) = (-)^\sigma \mathcal{\de}_N(ix;m-1),\ \ \ \sigma\in S_N,
\ee
which is clear from \eqref{deN}. Moreover, given that $\Phi_N(a,m;x,y)$ is manifestly symmetric in $x$ (cf.~\eqref{JN} and \eqref{PhiN}), the coefficients in the linear combination should be of the form $c(y)(-)^\sigma$, with $c$ independent of $\sigma$. In this section, we substantiate this expectation by proving the following theorem.

\begin{theorem}
\label{Thm:PhiN}
Let $m\in\mathbb{Z}_+$ and let $a>m-1$ be such that $\exp(-i\pi/a)$ is not a root of unity. Then, we have
\begin{multline}
\Phi_N(a,m;x,y)\\
= C_N(a,m)\frac{\sum_{\sigma\in S_N}(-)^\sigma \psi_N(e^{-i\pi/a},m-1;i\sigma(x),iy)}{\prod_{1\leq j<k\leq N}2\sinh\big(\frac{\pi}{a}(y_k-y_j)\big)\prod_{n=-m+1}^{m-1}2\sinh\big(\frac{\pi}{a}(x_k-x_j+in)\big)}
\end{multline}
with the constant
\be
C_N(a,m) = (i^m)^{N(N-1)/2}\left(\frac{a}{\prod_{n=1}^{m-1}2\sin(\pi n/a)}\right)^{(N-1)(N-2)/2}.
\ee
\end{theorem}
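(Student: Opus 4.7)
The natural strategy is to induct on $N$, using the integral recursion \eqref{JN} (together with the renormalisation \eqref{PhiN}) as the defining relation for $\Phi_N$. The base case $N=1$ is immediate: $\Phi_1(a,m;x,y)=\exp(2\pi ixy/a)=\psi_1(e^{-i\pi/a},m-1;ix,iy)$, with $C_1=1$ and empty Vandermonde-type products in the denominator.

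For the inductive step, I would substitute the claimed formula for $\Phi_{N-1}$ into the integrand of \eqref{JN}, with $z\in\R^{N-1}$ in place of $x$ and $(y_1-y_N,\ldots,y_{N-1}-y_N)$ in place of $y$. At the integer coupling $b=m$, iterated use of the difference equations \eqref{Gades} collapses every hyperbolic gamma ratio to a finite product of $2\cosh$-factors: the kernel $\cS_N^\sharp(m;x,z)$ becomes rational in $2\cosh(\pi(x_j-z_k+i(m-2n+1)/2)/a)$ for $n=1,\ldots,m$, and the weight $W_{N-1}(m;z)$ combines with the antisymmetrising denominator from the inductive hypothesis to remove all $z$-singularities in a neighbourhood of $\R^{N-1}$. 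I would then evaluate the $(N-1)$-fold integral by iteratively shifting each $z_k$-contour into the lower half-plane and collecting the residues of $\cS_N^\sharp$ at the points $z_k=x_{j_k}-i(a-m+2l_{j_kN}+1)/2$ with $j_k\in\{1,\ldots,N\}$ and $l_{j_kN}\in\{0,\ldots,m-1\}$. The hypothesis $a>m-1$ guarantees that precisely these $m$ poles per $j_k$ lie in the strip $-a/2<\im z_k<0$, and the explicit asymptotics of $\Phi_{N-1}$ from \cite{HR14} provide the decay needed to justify the contour shift.

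Each admissible configuration $\{(j_k,l_{j_kN})\}_{k=1}^{N-1}$ selects, bijectively, one top-level vertex of the weight-vector lattice \eqref{nu} together with a coset representative $\sigma\in S_N$ via the omitted index; the corresponding residue multiplied by the inductively known $\psi_{N-1}$-factor assembles, up to a uniform prefactor, into $q^{2(\nu,y)}\psi_{N,\nu}(i\sigma(x))$, and summation reproduces the antisymmetrised BA-sum in Thm.~\ref{Thm:PhiN}. Matching against the normalisation \eqref{psiNrhom} of the leading coefficient determines $C_N$ by a short recursion on $N$, and inverting the same residue identity produces the iterated residue formula of Prop.~\ref{Prop:psiN} together with an additional (constructive) existence proof for $\psi_N$.

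The principal obstacle is the analytic and combinatorial bookkeeping of this iterated residue calculation: justifying the contour shifts and the asymptotic control on $\Phi_{N-1}$, verifying that the antisymmetrising denominator carried along from the previous step does not introduce spurious poles of its own, and tracking the $G$-values at each shift point along with all $2\sinh$-Vandermonde and renormalisation factors from \eqref{PhiN}--\eqref{psiNrhom} carefully enough to recover the explicit closed-form constant $C_N$. The combinatorial bijection between residue configurations and BA-weight vectors is itself essentially rigid once the uniqueness of $\psi_N$ and the vanishing conditions \eqref{psiNqinv} are brought to bear.
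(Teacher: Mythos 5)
Your overall architecture (induction on $N$ through the integral recursion, residue evaluation of the $(N-1)$-fold integral, identification with $\psi_N$ via the uniqueness and normalisation of the BA-function) is the same as the paper's, but the central analytic step is not set up correctly and a key cancellation is missing. First, the contour shift: the integrand of \eqref{PhiNRec} is, up to the exponential factors carrying the $y$-dependence, (anti)periodic under $z_k\to z_k-ia$, so it does \emph{not} decay as $\im z_k\to-\infty$; pushing the contour into the lower half-plane would sweep up the full tower of poles $\ell\in\Z$ with no absolute convergence, and the inductively known $\Phi_{N-1}$ is a sum of terms whose exponentials grow or decay in directions depending on the ordering of the $y_j$, so no single half-plane works for all of them. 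The paper's device is different and essential: multiply first by the Vandermonde-type factor $\prod_{j<k}2\sinh(\pi(y_j-y_k))$, expand it into $2^{N-1}$ exponentials, absorb each exponential as a \emph{finite} shift $z\to z\mp ia/2$ using the quasi-periodicity \eqref{varphiInvProp} of $\varphi_{N-1}$ (under which the $\cosh$-kernel turns into the $\sinh$-kernel), and thereby assemble the shifted integrals into one integral over the compact closed contour $\cC^{N-1}$ bounding the strip $|\im z_k|<a/2$. Only then does the residue theorem produce a finite sum, namely the $m$ poles $z_k=x_{j_k}+i(m-2n-1)/2$, $n=0,\dots,m-1$, per index $j_k$; your claim that the relevant poles are $z_k=x_{j_k}-i(a-m+2l+1)/2$ lying in $-a/2<\im z_k<0$ is also quantitatively wrong (for $a>m-1$ only roughly half of the $m$ poles per index lie in that strip).

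Second, your ``bijection'' between residue configurations and pairs (weight vector, coset representative $\sigma$) silently assumes the indices $j_1,\dots,j_{N-1}$ are distinct. The iterated residue computation also produces configurations in which two or more $j_k$ coincide, and these contributions are individually nonzero; showing that they cancel is one of the two main technical points of the proof (the vanishing of the remainder $R$ in the proof of Prop.~\ref{Prop:PhiNExp}), and it uses the entirety of $\psi_{N-1}$, its quasi-invariance conditions \eqref{psiNqinv}, and an antisymmetry of $W_{N-1}$ under permuting the integers $n_j$ attached to a common $x_i$. Relatedly, invoking uniqueness of $\psi_N$ at the end requires verifying the vanishing conditions for \emph{all} pairs $(j,k)$: those with $j,k\leq N-1$ come from the inductive hypothesis, but those involving the index $N$ do not, and the paper must first prove the $S_N$-antisymmetry of the residue sum (Lemma \ref{Lemma:inv}, itself resting on a Noumi--Shiraishi-type uniqueness of asymptotically free eigenfunctions) to reduce to the inherited case. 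Without these ingredients the proposal does not close.
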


The first main observation in our proof of Thm.~\ref{Thm:PhiN}, as detailed below, is that the product function $\Phi_N(a,m;x,y) \prod_{1\leq j<k\leq N}2\sinh(\pi(y_j-y_k))$ can be rewritten as the symmetrisation of a function $\varphi_N(a,m;x,y)$ that is given recursively by an iterated residue formula.

More specifically, the formula involves denominator factors $\sinh\big(\frac{\pi}{a}(x_j-z_k+i(m-2n-1)/2)\big)$, producing poles at
\be
z_k = x_j+i(m-2n-1)/2+i\ell a,\ \ \ n = 0,\ldots,m-1,\ \ \ell\in\mathbb{Z},
\ee
where $j=1,\ldots,N$ and $k=1,\ldots,N-1$. Assuming that $\re(x_j-x_k)\neq 0$ for all $1\leq j<k\leq N$ and $a>m-1$, we can find contours $\gamma_k$ that only encircle the poles corresponding to a fixed value of $k=1,\ldots,N-1$ and  $\ell=0$; see the discussions preceding Props.~\ref{Prop:Phi2Exp} and \ref{Prop:PhiNExp} for explicit examples. Up to normalisation, the recursive formula defining $\varphi_N$ can then be obtained from \eqref{PhiNRec} by the substitutions $\Phi\to\varphi$, $\R^{N-1}\to\gamma_1\times\cdots\times\gamma_{N-1}$ and $\cosh\to\sinh$.

The second main observation, which more or less completes the proof of Thm.~\ref{Thm:PhiN}, is that $\varphi_N$ essentially amounts to a renormalisation of $\psi_N$ and, as a byproduct, we thus find the following recursive construction of the self-dual BA-function $\psi_N$.

\begin{proposition}
\label{Prop:psiN}
Under the above assumptions on $m$ and $a$, the BA-function $\psi_N$ is obtained from $\psi_{N-1}$ by the iterated residue formula
\begin{multline}
\psi_N(e^{-i\pi/a},m;ix,iy)\\
= \left(\frac{\prod_{n=1}^m\sin(\pi n/a)}{a(-i)^{m+1}}\right)^{N-1} \frac{W_N(1,a,m+1;z)}{\prod_{1\leq j<k\leq N}2\sinh\left(\frac{\pi}{a}(x_k-x_j)\right)} e^{\frac{2\pi i}{a} y_N\sum_{j=1}^Nx_j}\\
\cdot \int_{\underline{\gamma}}dz \frac{\psi_{N-1}(e^{-i\pi/a},m;iz,i(y_1-y_N,\ldots,y_{N-1}-y_N))\prod_{1\leq j<k\leq N-1}2\sinh\left(\frac{\pi}{a}(z_k-z_j)\right)}{\prod_{j=1}^N\prod_{k=1}^{N-1}\prod_{n=0}^{m}2\sinh\big(\frac{\pi}{a}\big(x_j-z_k+\frac{i}{2}(m-2n)\big)\big)},
\end{multline}
where $\underline{\gamma}\equiv \gamma_1\times\cdots\times\gamma_{N-1}$.
\end{proposition}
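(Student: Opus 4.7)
The plan is to prove the recursion as a by-product of the proof of Theorem \ref{Thm:PhiN}: I would denote by $\tilde\psi_N(ix,iy)$ the right-hand side of the stated formula and verify directly that it satisfies the three defining properties of the self-dual Baker--Akhiezer function, namely the expansion form \eqref{psiN} with support in the weights \eqref{nu}, the vanishing conditions \eqref{psiNqinv}, and the normalisation \eqref{psiNrhom}. Since $q=e^{-i\pi/a}$ is assumed not to be a root of unity, the uniqueness result of \cite{ES98,Cha02} then forces $\tilde\psi_N=\psi_N$. The argument proceeds by induction on $N$, with the base case $\psi_1(ix,iy)=e^{2\pi i xy/a}$ trivial.

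For the inductive step, the first task is to convert the iterated contour integral into a finite sum of residues. Under the hypothesis $a>m-1$, together with a generic assumption $\mathrm{Re}(x_j-x_k)\neq 0$, the $z_k$-poles of the integrand arising from the denominator factors $\sinh\bigl(\tfrac{\pi}{a}(x_j-z_k+\tfrac{i}{2}(m-2n))\bigr)$ decouple into disjoint horizontal strips, and each $\gamma_k$ is chosen to enclose exactly the $N(m+1)$ poles with $\ell=0$. By the inductive hypothesis, $\psi_{N-1}(iz,i\tilde y)$ is entire in $z$, so the iterated residue is a finite sum indexed by tuples $((j_k,n_k))_{k=1}^{N-1}\in(\{1,\ldots,N\}\times\{0,\ldots,m\})^{N-1}$. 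Substituting $z_k=x_{j_k}+\tfrac{i}{2}(m-2n_k)$ into the inductive expansion of $\psi_{N-1}$ and combining with the prefactor $e^{2\pi i y_N\sum x_j/a}$ yields an expansion of $\tilde\psi_N(ix,iy)$ of the form \eqref{psiN}, in which the weight vectors $\nu$ are precisely of the shape \eqref{nu}, the shifts $\tfrac{i}{2}(m-2n_k)$ supplying the parameters $l_{jk}$.

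It then remains to check the vanishing conditions and the normalisation. For pairs $1\le j<k\le N-1$, the vanishing conditions on the loci $q^{2(y_j-y_k)}=1$ descend by commuting the $y$-shift with the $z$-integration and applying the inductive vanishing property of $\psi_{N-1}$ to the shifted arguments $(y_1-y_N,\ldots,y_{N-1}-y_N)$. The conditions involving the new variable $y_N$ require a genuinely new argument: on each locus $y_j-y_N\in a\mathbb{Z}$, the required invariance should emerge from a pairing cancellation between the residue contributions at $(j_k,n_k)$ and $(j_k,m-n_k)$, reflecting the $n\mapsto m-n$ symmetry of the denominator $\sinh$-product. For the normalisation, the top weight $\nu=\rho_N(m)$ is realised by the unique extremal tuple $(j_k,n_k)$ consistent with the prescribed ordering of the $x_j$; computing this extremal residue explicitly and combining with the $W_N(1,a,m+1;x)$ prefactor and the constants in front should reproduce \eqref{psiNrhom} after routine rearrangement, using the inductive normalisation of $\psi_{N-1}$.

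The main obstacle will be the residue-pairing cancellation that enforces the vanishing condition for the new variable $y_N$: unlike the other checks, it cannot be reduced to a property of $\psi_{N-1}$, and requires careful bookkeeping of the residue combinatorics in conjunction with the explicit $\sinh$-structure of the integrand.
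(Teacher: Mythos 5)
Your overall framework---verify that the right-hand side satisfies the expansion property \eqref{psiN}, the vanishing conditions \eqref{psiNqinv} and the normalisation \eqref{psiNrhom}, then invoke uniqueness of the self-dual BA function---is the same as the paper's, which establishes exactly these properties for the residue function $\varphi_N$ of \eqref{varphiN} (Props.~\ref{Prop:vphiExp} and \ref{Prop:qinv}) and deduces the present statement from the resulting identification $\varphi_N\propto\psi_N$ (Prop.~\ref{Prop:varphiNpsi}). Your treatment of the expansion, the normalisation, and the vanishing conditions for pairs $1\le j<k\le N-1$ matches the paper's. The gap is precisely where you flag it: the vanishing conditions for pairs $(j,N)$ involving the new spectral variable. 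You conjecture that these follow from a pairing of residue contributions under $n_k\mapsto m-n_k$, but this is not the mechanism even in the $N=2$ base case: there the paper reduces the claim to the vanishing of the residue sum \eqref{sum}, which it proves via Liouville's theorem by checking that the residues of that sum \emph{as a function of} $x_1-x_2$ cancel under the involution $l\mapsto m-1-\ell-l$; this involution depends on the pole location $\ell$ and is not the naive $n\mapsto m-n$ symmetry of the denominator. So your proposed cancellation is not merely unproven; it points at the wrong combinatorics.

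For general $N$ the paper avoids direct residue combinatorics for the $(j,N)$ conditions altogether. It first proves the $S_N$-equivariance $\varphi_N(\sigma(x),\sigma(y))=(-)^{|\sigma|}\varphi_N(x,y)$ (Lemma~\ref{Lemma:inv}), which transfers the vanishing condition from the pair $(1,2)$---where it follows at once from the induction hypothesis applied to $\varphi_{N-1}$ inside the integrand---to every pair, including those involving $y_N$. That lemma is in turn proven by exploiting that the antisymmetrisation of $\varphi_N$ in $x$ reproduces the joint eigenfunction $\Phi_N$ (Prop.~\ref{Prop:PhiNExp}), so that each $\varphi_N(\sigma(x),y)$ satisfies the Macdonald difference equation in $y$, combined with a Noumi--Shiraishi-type uniqueness theorem for asymptotically free solutions with prescribed leading coefficient. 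Because your induction is deliberately decoupled from $\Phi_N$ and Theorem~\ref{Thm:PhiN}, this route is unavailable to you, and you would be forced to carry out the deferred residue bookkeeping directly---which, as the $N=2$ case already indicates, is substantially more delicate than a single pairing. To repair the proposal you should either re-couple it to the eigenfunction identity as the paper does, or supply a genuine proof of the $y_N$-vanishing conditions modelled on the Liouville argument for \eqref{sum}.
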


This result provides a natural generalisation of the iterated residue formulae for BA-functions for rational and trigonometric Calogero--Moser--Sutherland operators obtained by Felder and Veselov \cite{FV09}.

In the two subsections below, we provide our proofs of Thm.~\ref{Thm:PhiN} and Prop.~\ref{Prop:psiN}. To begin with, we establish the $N=2$ cases of the results. These we then use as the basis for proofs by induction on $N$. To shorten some of the formulae involved, we typically suppress dependence on the parameters $a$ and $m$.

\subsection{The $N=2$ case}
Setting $N=2$ in \eqref{PhiNRec}, we deduce
\begin{multline}
\label{Phi2Expr}
\frac{2a\sinh\big(\frac{\pi}{a}(y_1-y_2)\big)}{\prod_{n=1}^{m-1}2\sin(\pi n/a)}\Phi_2(x,y)\\
= \int_{\mathbb{R}}dz\frac{e^{\frac{2\pi i}{a}(y_1-y_2)(z-ia/2)}-e^{\frac{2\pi i}{a}(y_1-y_2)(z+ia/2)}}{\prod_{j=1}^2\prod_{n=0}^{m-1}2\cosh\big(\frac{\pi}{a}(x_j-z+i(m-2n-1)/2)\big)}\\
= (-1)^m\left(\int_{\R-ia/2}dz-\int_{\R+ia/2}dz\right)\frac{e^{\frac{2\pi i}{a}(y_1-y_2)z}}{\prod_{j=1}^2\prod_{n=0}^{m-1}2\sinh\big(\frac{\pi}{a}(x_j-z+i(m-2n-1)/2)\big)}.
\end{multline}
We note that $a>m-1$ and $x,y\in\mathbb{R}^2$ ensures that $\Phi_2$ is well-defined and that the only poles of the integrand located within the strip $|\im z|<a/2$ are
\be
\label{N2Pls}
z = x_j+i(m-2n-1)/2,\ \ \ j = 1,2,\ \ n = 0,\ldots,m-1.
\ee
At first, we assume $x=(x_1,x_2)\in\R^2$ is such that $x_1\neq x_2$, which ensures that we can introduce a contour $\gamma_1$ encircling only the poles \eqref{N2Pls} with $j=1$ counterclockwise. For example, with $\de\equiv |x_1-x_2|>0$, we can take the contour consisting of line segments $x_1+u\pm ia/2$, $|u|\leq\de/2$, and $x_1\pm\de/2+iv$, $|v|\leq a/2$. Introducing the function
\be
\label{varphi2}
\varphi_2(x,y)\equiv e^{\frac{2\pi i}{a} y_2(x_1+x_2)}\int_{\gamma_1}dz\frac{e^{\frac{2\pi i}{a}(y_1-y_2)z}}{\prod_{j=1}^2\prod_{n=0}^{m-1} 2\sinh\big(\frac{\pi}{a}(x_j-z+i(m-2n-1)/2)\big)},
\ee
the following result is an easy consequence of \eqref{Phi2Expr} and  Cauchy's residue theorem.

\begin{proposition}
\label{Prop:Phi2Exp}
For $m\in\mathbb{Z}_+$ and $a>m-1$, we have
\be
\Phi_2(x,y) = (-1)^m\frac{\prod_{n=1}^{m-1}2\sin(\pi n/a)}{a}\frac{\sum_{\sigma\in S_2}\varphi_2(\sigma(x),y)}{2\sinh\big(\frac{\pi}{a}(y_1-y_2)\big)}.
\ee
\end{proposition}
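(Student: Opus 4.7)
The strategy is to apply Cauchy's residue theorem to the second expression on the right-hand side of \eqref{Phi2Expr}, closing the two parallel contours $\R-ia/2$ and $\R+ia/2$ into a rectangle that encloses the horizontal strip $|\im z|<a/2$. The hypothesis $a>m-1$ ensures that all $2m$ poles of the integrand listed in \eqref{N2Pls} lie strictly inside this strip, since their imaginary parts range over $\{-(m-1)/2,\ldots,(m-1)/2\}$, and the running assumption $x_1\neq x_2$ makes them simple. To dispose of the vertical sides of the rectangle at $\re z=\pm R$ as $R\to\infty$, I would note that the $2m$ denominator factors $2\sinh(\pi(x_j-z+i(m-2n-1)/2)/a)$ jointly grow like $e^{2m\pi|\re z|/a}$ while the numerator $e^{2\pi i(y_1-y_2)z/a}$ is bounded for $|\im z|\leq a/2$. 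This exponential decay yields
\[
\left(\int_{\R-ia/2}-\int_{\R+ia/2}\right)F(z)\,dz = 2\pi i\sum_{j=1}^2\sum_{n=0}^{m-1}\mathrm{Res}_{z=x_j+i(m-2n-1)/2}F(z),
\]
where $F$ denotes the integrand on the second line of \eqref{Phi2Expr}.

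The second step is to recognise this residue sum as the symmetrisation of $\varphi_2$. By construction $\gamma_1$ encircles only the $j=1$ poles of $F$ counterclockwise, so Cauchy's theorem together with \eqref{varphi2} gives
\[
\varphi_2(x,y) = 2\pi i\,e^{\frac{2\pi i}{a}y_2(x_1+x_2)}\sum_{n=0}^{m-1}\mathrm{Res}_{z=x_1+i(m-2n-1)/2}F(z).
\]
Since both the $x$-dependent denominator in \eqref{varphi2} and the prefactor $e^{\frac{2\pi i}{a}y_2(x_1+x_2)}$ are invariant under $x_1\leftrightarrow x_2$, the effect of the nontrivial element of $S_2$ is merely to redeploy the contour so as to encircle the poles with $j=2$. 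Summing the two contributions therefore reproduces the full residue sum from the previous display, multiplied by the common exponential prefactor.

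Combining these two identities with \eqref{Phi2Expr} and solving for $\Phi_2(x,y)$ produces the stated formula. I expect the main obstacle to be purely bookkeeping: accurately tracking the factor $2\pi i$ from residue extraction, the sign $(-1)^m$ produced by the $\cosh\to\sinh$ conversion already carried out in \eqref{Phi2Expr}, the counterclockwise orientation of $\gamma_1$, and the interplay between the prefactor $e^{\frac{2\pi i}{a}y_2(x_1+x_2)}$ implicit in \eqref{PhiNRec} for $N=2$ and the one appearing in the definition of $\varphi_2$. No analytically subtle step arises beyond the elementary decay estimate; the genericity hypothesis $x_1\neq x_2$ needed to define $\gamma_1$ can be lifted at the end by continuity of both sides of the identity.
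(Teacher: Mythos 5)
Your argument is essentially identical to the paper's: Prop.~\ref{Prop:Phi2Exp} is presented there as an immediate consequence of \eqref{Phi2Expr} and Cauchy's residue theorem, which is precisely the contour-closing, decay estimate on the vertical sides, and regrouping of the $j=1$ and $j=2$ residues into $\varphi_2(x,y)$ and $\varphi_2(\sigma(x),y)$ that you carry out. The bookkeeping caveats you flag (the $(-1)^m$ from the $\cosh\to\sinh$ conversion, the exponential prefactor, and lifting $x_1\neq x_2$ only for the symmetrised sum, whose individual terms do have poles there) are exactly the right ones.
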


We proceed to establish the precise connection between $\varphi_2$ and the self-dual BA-function $\psi_2$. As detailed in the following proposition, when applying Cauchy's residue thm.~to \eqref{varphi2} we obtain a series expansion for $\varphi_2$ analogous to the expansion of $\psi_2$ given by the $N=2$ instance of \eqref{psiN}.

\begin{proposition}
\label{Prop:varphi2Exp}
Assuming that $m\in\mathbb{Z}_+$ and $a>m-1$, we have
\be
\varphi_2(x,y) =e^{\frac{2\pi i}{a}(x,y)}\sum_{l=0}^{m-1}\varphi_{2,l}(x)e^{\frac{2\pi}{a}\left(\frac{m-1}{2}-l\right)(y_1-y_2)}
\ee
with
\be
\label{varphi2l}
\varphi_{2,l}(x) = \frac{a(-i)^m}{\prod_{n\neq l}2\sin\big(\frac{\pi}{a}(n-l)\big)}\frac{1}{\prod_{n=0}^{m-1}2\sinh\big(\frac{\pi}{a}(x_2-x_1+i(n-l))\big)}.
\ee
\end{proposition}

We note that the above series essentially amounts to a special (terminating) case of the basic hypergeometric series
\be
{}_2\phi_1
\left(\,\begin{matrix}a,\, b\\ \, c\end{matrix}\,\Big|\,q,z\right) = \sum_{l=0}^\infty \frac{(a:q)_l(b:q)_l}{(c;q)_l(q;q)_l}z^l,
\ee
where the $q$-shifted factorial is given by $(a;q)_0=1$ and $(a;q)_l=(1-a)\cdots(1-aq^{l-1})$ when $l>0$. To verify this claims, it suffices to observe that
\be
\begin{split}
\frac{1}{\prod_{n\neq l}2\sin\left(\frac{\pi}{a}(n-l)\right)} &= \frac{1}{\prod_{n=1}^{m-1}2\sin\left(\frac{\pi}{a}n\right)} \prod_{n=0}^{l-1}\frac{2\sin\left(\frac{\pi}{a}(m-1-n)\right)}{2\sin\left(\frac{\pi}{a}(-n-1)\right)}\\
&= \frac{t_m^l}{\prod_{n=1}^{m-1}2\sin\left(\frac{\pi}{a}n\right)}\frac{(q^2/t_m^2;q^2)_l}{(q^2;q^2)_l}
\end{split}
\ee
and similarly that
\begin{multline}
\frac{1}{\prod_{n=0}^{m-1}2\sinh\big(\frac{\pi}{a}(x_2-x_1+i(n-l))\big)}\\
= t_m^l\frac{1}{\prod_{n=0}^{m-1}2\sinh\big(\frac{\pi}{a}(x_2-x_1+in)\big)}\frac{(q^2X_2/t_m^2X_1;q^2)_l}{(q^2X_2/X_1;q^2)_l},
\end{multline}
where we have used the parametrisation \eqref{q} for $q$ and introduced
\be
t_m\equiv e^{-\frac{i\pi}{a}m} = q^m
\ee
as well as
\be
X_j\equiv e^{\frac{2\pi}{a}x_j} = q^{i2x_j} ,\ \ Y_j\equiv e^{\frac{2\pi}{a}y_j} = q^{i2y_j},\ \ \ j = 1,2.
\ee
Indeed, it is clear from the above computations that
\be
\label{varphi2Exp}
\begin{split}
\varphi_2(x,y) &= \frac{a(-i)^m\exp\left(\frac{2\pi i}{a}(x-i\rho_2(m-1),y)\right)}{\prod_{n=1}^{m-1}2\sin(\pi n/a)\prod_{n=0}^{m-1}2\sinh\big(\frac{\pi}{a}(x_2-x_1+in)\big)}\\
& \quad \cdot {}_2\phi_1
\left(\,\begin{matrix}q^2/t_m^2,\, q^2X_2/t_m^2X_1\\ \, q^2X_2/X_1\end{matrix}\,\Big|\,q^2,t_m^2Y_2/Y_1\right).
\end{split}
\ee

Starting from the $q$-difference eigenvalue equation satisfied by ${}_2\phi_1$, it would now be a simple exercise to show that $\varphi_2$ is an eigenfunction of the $N=2$ Macdonald operator $D_2^1$ for suitable parameter values. One could also invoke the duality relation \eqref{dualRel} and linear independence of $\varphi_2(\sigma(x),y)$, $\sigma\in S_2$, for generic $x\in\mathbb{R}^2$; see Lemma \ref{Lemma:inv} below. Either way, one could then use a uniqueness result such as Prop.~4.1 in \cite{Nou23} for `asymptotically free' eigenfunctions of $D_2^1$ to arrive at the connection between $\varphi_2$ and $\psi_2$. We note that such uniqueness results for arbitrary $N$ can be found in the paper of Letzter and Stokman \cite{LS08} (in a general root system setting) as well as that of Noumi and Shiraishi \cite{NS12}.

Here, we rely instead on the defining properties of $\psi_2$ and show that $\varphi_2$ has vanishing properties similar to those given by \eqref{psiNqinv} with $N=2$. 

\begin{proposition}
For $s=1,\ldots,m-1$, we have
\be
\label{varphi2Diff}
\varphi_2\left(x,y+is(e_1-e_2)/2\right)-\varphi_2\left(x,y-is(e_1-e_2)/2\right) = 0\ \ \text{when}\ \ e^{\frac{2\pi}{a}(y_1-y_2)} = 1.
\ee
\end{proposition}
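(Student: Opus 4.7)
The plan is to reduce the claim to a trigonometric identity via the series expansion of Proposition~\ref{Prop:varphi2Exp}, and then prove the identity by combining an elementary $w\leftrightarrow-w$ symmetry with a short residue calculation.

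First I substitute the series expansion into the difference. Under $y\mapsto y\pm is(e_1-e_2)/2$, the prefactor $e^{\frac{2\pi i}{a}(x,y)}$ gains a factor $e^{\mp\pi s(x_1-x_2)/a}$ and each summand's exponential gains $e^{\pm i\pi s(m-1-2l)/a}$; crucially, on the constraint $y_1-y_2=iak$ (with $k\in\mathbb{Z}$), the factor $e^{\frac{2\pi}{a}(\frac{m-1}{2}-l)(y_1-y_2)}$ collapses to the $l$-independent value $(-1)^{k(m-1)}$. Assembling terms and inserting the explicit form of $\varphi_{2,l}$ from \eqref{varphi2l}, the claim \eqref{varphi2Diff} becomes equivalent to the trigonometric identity
\begin{equation*}
T(w,s):=\sum_{l=0}^{m-1}\frac{\sinh(\pi s(w+i(m-1-2l))/a)}{A_l\,B_l(w)}=0,\quad w:=x_2-x_1,\ s=1,\ldots,m-1,
\end{equation*}
with $A_l=\prod_{n\neq l}2\sin(\pi(n-l)/a)$ and $B_l(w)=\prod_{n=0}^{m-1}2\sinh(\pi(w+i(n-l))/a)$.

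Next I combine two observations. The first is a symmetry: relabeling $l\to m-1-l$ in $T(w,s)$ and using the elementary relations $A_{m-1-l}=(-1)^{m-1}A_l$ and $B_{m-1-l}(w)=(-1)^m B_l(-w)$, together with the oddness of $\sinh$, yields $T(w,s)=T(-w,s)$. The second comes from a contour integral. Consider
\begin{equation*}
g(u):=\frac{\sinh(\pi s(w+i(m-1-2u))/a)}{\prod_{n=0}^{m-1}\sin(\pi(n-u)/a)\cdot\prod_{n=0}^{m-1}2\sinh(\pi(w+i(n-u))/a)},
\end{equation*}
which is $a$-periodic (the two denominator products each acquire $(-1)^m$ under $u\to u+a$ and cancel, while the numerator is invariant for integer $s$) and decays like $e^{-2(m-s)\pi|\mathrm{Im}\,u|/a}$ for $1\le s\le m-1$. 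Applying the residue theorem to a fundamental period strip and pushing the horizontal sides to $\pm i\infty$ therefore gives
\begin{equation*}
\sum_{l=0}^{m-1}\mathrm{Res}_{u=l}\,g+\sum_{n=0}^{m-1}\mathrm{Res}_{u=n-iw}\,g=0.
\end{equation*}

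The first sum evaluates directly to $-\frac{2^{m-1}a}{\pi}T(w,s)$. The hard part will be identifying the second sum with $-\frac{2^{m-1}a}{\pi}T(-w,s)$: at $u=n-iw$ one has $w+i(n'-u)=i(n'-n)$, so each $2\sinh$-factor with $n'\neq n$ becomes $2i\sin(\pi(n'-n)/a)$ and contributes $i^{m-1}A_n$; the remaining product $\prod_{n'}\sin(\pi(n'-n+iw)/a)$ is rewritten, via the identity $\sin(\pi(k+iw)/a)=i\sinh(\pi(w-ik)/a)$ and a reindexing $k=n-n'$, as $\frac{(-i)^m}{2^m}B_n(-w)$. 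Collecting the constants (using $(-i)^m i^{m-1}=-i$) yields the claimed value, so the residue identity becomes $T(w,s)+T(-w,s)=0$, which combined with the symmetry $T(w,s)=T(-w,s)$ forces $T(w,s)=0$.
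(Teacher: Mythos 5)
Your proof is correct. The reduction to the trigonometric identity $T(w,s)=0$ coincides (up to the harmless relabelling $l\to m-1-l$) with the identity \eqref{sum} that the paper's own proof arrives at; the paper gets there by applying Cauchy's theorem directly to the integral \eqref{varphi2} rather than by quoting Prop.~\ref{Prop:varphi2Exp}, but that is the same residue computation. Where you genuinely diverge is in the proof of the identity itself. The paper treats the left-hand side of \eqref{sum} as a meromorphic function of $x_1-x_2$ that is bounded away from its poles and decays as $|\re(x_1-x_2)|\to\infty$ precisely because $s<m$, and then invokes Liouville's theorem after checking that the residues at the candidate poles cancel in pairs under the involution $l\mapsto m-1-\ell-l$. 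You instead interpolate the summation index by an auxiliary variable $u$, note that $g(u)$ is $a$-periodic with decay $e^{-2\pi(m-s)|\im u|/a}$ (again the only place $s\le m-1$ enters), and conclude that the total residue sum over a period strip vanishes; the poles at $u=l$ reproduce $T(w,s)$, those at $u=n-iw$ reproduce $T(-w,s)$ (your constant bookkeeping, including $(-i)^m i^{m-1}=-i$, checks out), and the elementary symmetry $T(w,s)=T(-w,s)$ finishes the argument. Both proofs thus rest on the same two ingredients --- exponential decay forced by $s<m$ and an $l\to m-1-(\,\cdot\,)$ involution --- but deploy them in different variables ($w$ versus $u$). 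Your route spares one from enumerating and pairing the residues of $T$ itself; the small price is having to verify periodicity, uniform decay on the strip, and that the $2m$ poles of $g$ are simple and distinct, which holds for real $w=x_2-x_1\neq 0$ (the regime in which $\gamma_1$, hence $\varphi_2$, is defined), with the identity then extending by meromorphic continuation.
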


\begin{proof}
Up to an overall factor $2\exp(\frac{2\pi i}{a}(y_2(x_1+x_2)))$, the LHS of \eqref{varphi2Diff} is given by the integral
\be
\int_{\gamma_1}dz\frac{\exp\big(\frac{2\pi i}{a}(y_1-y_2)z\big)\sinh(\frac{\pi s}{a}(x_1+x_2-2z))}{\prod_{j=1}^2\prod_{n=0}^{m-1} 2\sinh(\frac{\pi}{a}(x_j-z+i(m-2n-1)/2))},
\ee
which, by Cauchy's theorem, equals
\begin{multline}
(-i)^me^{\frac{2\pi i}{a}(y_1-y_2)x_1}\sum_{l=0}^{m-1}\frac{a}{\prod_{n\neq l}2\sin(\frac{\pi}{a}(n-l))}\\
\cdot \frac{\sinh(\frac{\pi s}{a}(x_2-x_1-i(m-2l-1))}{\prod_{n=0}^{m-1}2\sinh(\frac{\pi}{a}(x_2-x_1+i(n-l))}e^{\frac{\pi}{a}(m-2l-1)(y_1-y_2)}.
\end{multline}
Since $e^{-\frac{\pi}{a}(m-2l-1)(y_1-y_2)}=e^{\frac{\pi}{a}(m-1)(y_1-y_2)}$ when $e^{\frac{2\pi}{a}(y_1-y_2)} = 1$, we only have to show that
\be
\label{sum}
\sum_{l=0}^{m-1}\frac{a}{\prod_{n\neq l}2\sin(\frac{\pi}{a}(n-l))} \frac{\sinh(\frac{\pi s}{a}(x_2-x_1+i(m-2l-1))}{\prod_{n=0}^{m-1}2\sinh(\frac{\pi}{a}(x_2-x_1+i(n-l))} = 0.
\ee
To this end, we note that $s<m$ entails that the left-hand side is a meromorphic function in $x_1-x_2$ that is bounded away from its poles and decays exponentially as $|\re(x_1-x_2)|\to\infty$. Hence, Liouville's theorem will imply the above identity once we show that the residues at the (simple) poles
\be
x_2-x_1+i\ell = i\ell^\prime a,\ \ \ \ell = -m+1,\ldots,m-1,\ \ \ell^\prime\in\mathbb{Z},
\ee
all vanish; and, by $ia$-(anti)periodicity, we may and shall restrict attention to $\ell^\prime=0$. For a fixed $\ell=-m+1,\ldots,m-1$, we observe that there exists $n=0,\ldots,m-1$ such that $n-l=\ell$ if and only if
\be
\label{nrange}
l = \max(-\ell,0),\ldots,\min(m-1,m-1-\ell).
\ee
It follows that the residue of the left-hand side of \eqref{sum} at $x_2-x_1+i\ell=0$ is proportional to
\be
\sum_{l=\max(-\ell,0)}^{\min(m-1,m-1-\ell)}f_l,\ \ \ f_l(a)\equiv \frac{2a}{\prod_{n\neq l}\sin(\frac{\pi}{a}(n-l))}\frac{\sin(\frac{\pi s}{a}(2l+\ell-m+1))}{\prod_{n\neq l+\ell}\sin(\frac{\pi}{a}(n-l-\ell))}.
\ee
On the set of integers \eqref{nrange}, we have the involution $\sigma: l\mapsto m-1-\ell-l$. Since $f_{\sigma(l)}=-f_l$, as is easily checked, this clearly implies that the residue vanishes.
\end{proof}

Setting $l=0$ in \eqref{varphi2l}, we find that
\be
\label{varphi20}
\varphi_{2,0}(x) = \frac{2a(-i)^m}{\prod_{n=1}^{m-1}2\sin(\pi n/a)}\frac{1}{\prod_{n=0}^{m-1}2\sinh\big(\frac{\pi}{a}(x_2-x_1+in)\big)}.
\ee
Comparing \eqref{varphi20}, \eqref{varphi2Diff} and \eqref{varphi2Exp} with the $N=2$ instances of \eqref{psiNrhom}, \eqref{psiNqinv} and \eqref{psiN}, respectively, the uniqueness of the BA-function, once a normalisation has been fixed, implies that
\be
\varphi_2(x,y) = \frac{2a(-i)^m}{\prod_{n=1}^{m-1}2\sin(\pi n/a)}\frac{\psi_2(\exp(-i\pi/a),m-1;i\sigma(x),iy)}{\prod_{n=-m+1}^{m-1}2\sinh\big(\frac{\pi}{a}(x_2-x_1+in)\big)}.
\ee
When combined with Prop.~\ref{Prop:Phi2Exp}, we obtain the $N=2$ case of Thm.~\ref{Thm:PhiN}; and a comparison with \eqref{varphi2} yields the $N=2$ instance of Prop.~\ref{Prop:psiN}.

\subsection{The inductive step}
We can now use the $N=2$ case as the basis for proofs by induction. More precisely, we consider $N\geq 3$ and assume that Thm.~\ref{Thm:PhiN} along with Props.~\ref{Prop:PhiNExp}--\ref{Prop:vphiExp}, Lemma \ref{Lemma:inv} and Props.~\ref{Prop:qinv}--\ref{Prop:varphiNpsi} hold true when $N$ is replaced by $N-1$. (In the $N=2$ case, the pertinent results have all been established above.)

To begin with, we require that $x=(x_1,\ldots,x_N)\in\R^N$ with $x_j\neq x_k$ for all $1\leq j<k\leq N$. Letting $\de = \min\{|x_j-x_k|\mid 1\leq j<k\leq N\}$, we construct $N$ contours $\gamma_j$, $j=1,\ldots,N$, from the line segments $x_j+u\pm ia/2$, $|u|\leq\de/2$, and $x_j\pm\de/2+iv$, $|v|\leq a/2$; and define the function
\begin{multline}
\label{varphiN}
\varphi_N(x,y)\equiv e^{\frac{2\pi i}{a} y_N\sum_{j=1}^Nx_j}\\
\cdot \int_{\underline{\gamma}}dz \prod_{1\leq j<k\leq N-1}\sinh\left(\frac{\pi}{a}(z_k-z_j)\right)\prod_{n=-m+1}^{m-1} 2 \sinh\left(\frac{\pi}{a}(z_k-z_j+in)\right)\\\cdot \frac{\varphi_{N-1}(z,(y_1-y_N,\ldots,y_{N-1}-y_N))}{\prod_{j=1}^N\prod_{k=1}^{N-1}\prod_{n=0}^{m-1}2\sinh\big(\frac{\pi}{a}\big(x_j-z_k+\frac{i}{2}(m-2n-1)\big)\big)},
\end{multline}
with $\underline{\gamma}\equiv \gamma_1\times\cdots\times\gamma_{N-1}$, and where the $z_j$-contour $\gamma_j$ only encircles the simple poles
\be
z_j = x_j+i(m-2n-1)/2,\ \ \ n = 0,\ldots,m-1.
\ee
The arbitrary-$N$ generalisation of Prop.~\ref{Prop:Phi2Exp} now follows.

\begin{proposition}
\label{Prop:PhiNExp}
Assuming $m\in\mathbb{Z}_+$ and $a>m-1$, we get
\begin{multline}
\label{PhiExp}
\prod_{1\leq j<k\leq N}2\sinh(\pi(y_j-y_k))\cdot \Phi_N(x,y)\\
= (-1)^{mN(N-1)/2}\left(\frac{\prod_{n=1}^{m-1}2\sin(\pi n/a)}{a}\right)^{N-1}\sum_{\sigma\in S_N}\varphi_N(\sigma(x),y).
\end{multline}
\end{proposition}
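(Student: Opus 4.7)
The plan is to proceed by induction on $N$, with Prop.~\ref{Prop:Phi2Exp} serving as the base case. Assuming the claim for $N-1$, I would start from the recursive identity \eqref{PhiNRec}, multiply through by $\prod_{1\leq j<k\leq N}2\sinh(\pi(y_j-y_k))$, and split this product as $\prod_{j<N}2\sinh(\pi(y_j-y_N))\cdot\prod_{j<k<N}2\sinh(\pi(y_j-y_k))$. Noting that $y'_j-y'_k=y_j-y_k$ for $j,k<N$ where $y'\equiv(y_1-y_N,\ldots,y_{N-1}-y_N)$, the latter piece combines with $\Phi_{N-1}(z,y')$ via the inductive hypothesis, replacing it by $(-1)^{m(N-1)(N-2)/2}\bigl((\prod_{n=1}^{m-1}2\sin(\pi n/a))/a\bigr)^{N-2}\sum_{\sigma'\in S_{N-1}}\varphi_{N-1}(\sigma'(z),y')$.

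Next, I would exploit that both the numerator factor $\prod_{1\leq j\neq k\leq N-1}\prod_{n=0}^{m-1}2\sinh(\pi(z_j-z_k-in)/a)$ and the denominator $\prod_{j,k,n}2\cosh(\pi(x_j-z_k+i(m-2n-1)/2)/a)$ are symmetric under permutations of $(z_1,\ldots,z_{N-1})$. Performing the change of variables $z\to(\sigma')^{-1}(z)$ term-by-term in the $S_{N-1}$-sum produces $(N-1)!$ identical integrals, cancelling the $1/(N-1)!$ prefactor and leaving a single integral over $\mathbb{R}^{N-1}$ with $\varphi_{N-1}(z,y')$ in the numerator and the cosh-denominator.

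The core step is then a multi-dimensional contour shift generalising the $N=2$ manipulation in \eqref{Phi2Expr}. I would expand $\prod_{j<N}2\sinh(\pi(y_j-y_N))=\sum_{\epsilon\in\{\pm 1\}^{N-1}}\mathrm{sgn}(\epsilon)\prod_k e^{\epsilon_k\pi(y_k-y_N)}$. Since, by its BA-type structure \eqref{psiN}, $\varphi_{N-1}(z,y')$ is (up to normalisation) a finite linear combination of exponentials $e^{2\pi i(z+\nu,y')/a}$ uniform in $\nu$, each factor $e^{\epsilon_k\pi(y_k-y_N)}$ is absorbed as a shift $z_k\to z_k-i\epsilon_ka/2$ of the $z_k$-contour. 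Applying $\cosh(u+i\epsilon\pi/2)=i\epsilon\sinh(u)$ then converts the $Nm$ cosh-factors involving $z_k$ into sinh-factors, collectively producing a phase $\prod_k(i\epsilon_k)^{Nm}$; the signed sum over $\epsilon$ picks up, via Cauchy's theorem, precisely the residues in the strips $|\mathrm{Im}\,z_k|<a/2$, at the poles $z_k=x_j+i(m-2n-1)/2$ of the sinh-denominator.

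Each multi-residue is labelled by a map $\kappa\colon\{1,\ldots,N-1\}\to\{1,\ldots,N\}$ specifying which column of poles the $z_k$-contour encircles; but if $\kappa(k_1)=\kappa(k_2)$ with $k_1\neq k_2$ then $z_{k_1}-z_{k_2}$ at the corresponding pole equals $i(n_{k_2}-n_{k_1})$ with $n_{k_2}-n_{k_1}\in\{-(m-1),\ldots,m-1\}$, forcing one of the sinh-factors $2\sinh(\pi(z_{k_1}-z_{k_2}-in)/a)$ or $2\sinh(\pi(z_{k_2}-z_{k_1}-in)/a)$ in the numerator to vanish. Only the $N!$ injective $\kappa$ thus contribute, in bijection with $\sigma\in S_N$ (via $\sigma(k)=\kappa(k)$ for $k<N$, with $\sigma(N)$ the omitted index), and each reproduces the integral defining $\varphi_N(\sigma(x),y)$ on $\gamma_{\sigma(1)}\times\cdots\times\gamma_{\sigma(N-1)}$. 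The main obstacle will be the careful sign and constant bookkeeping: the phases $\prod_k(i\epsilon_k)^{Nm}$, the $\mathrm{sgn}(\epsilon)$ from the sinh-expansion, the inductive normalisation, and the residue Jacobians must combine to the prescribed $(-1)^{mN(N-1)/2}\bigl((\prod_{n=1}^{m-1}2\sin(\pi n/a))/a\bigr)^{N-1}$.
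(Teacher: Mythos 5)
Your proposal tracks the paper's proof closely up to the residue computation: same induction, same use of $S_{N-1}$-symmetry to cancel the $1/(N-1)!$, same absorption of the factors $e^{\pm\pi(y_j-y_N)}$ into imaginary shifts of the $z_j$-contours (the paper records this as the invariance property \eqref{varphiInvProp}), and the same conversion of the $\cosh$-denominator into a $\sinh$-denominator. The gap is in your final step, where you claim that the non-injective labellings $\kappa$ contribute nothing because, when $\kappa(k_1)=\kappa(k_2)$, one of the factors $2\sinh(\tfrac{\pi}{a}(z_{k_1}-z_{k_2}-in))$ of $W_{N-1}(z)$ vanishes at the pole. This overlooks that the other numerator factor, $\varphi_{N-1}(z,y')$, has \emph{poles} at exactly those points: by Prop.~\ref{Prop:varphiNpsi} (with $N\to N-1$), $\varphi_{N-1}$ equals the entire function $\psi_{N-1}$ divided by $\prod_{j<k}\prod_{n=-m+1}^{m-1}2\sinh(\tfrac{\pi}{a}(z_k-z_j+in))$, so the product $W_{N-1}(z)\varphi_{N-1}(z,y')$ reduces to $\psi_{N-1}$ times only $\prod_{j<k}(-1)^{m+1}2\sinh(\tfrac{\pi}{a}(z_j-z_k))$. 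Consequently the numerator genuinely vanishes only when $n_{k_1}=n_{k_2}$; for $n_{k_1}\neq n_{k_2}$ (with $\kappa(k_1)=\kappa(k_2)$) the iterated residue is generically nonzero, and your conclusion that "only the $N!$ injective $\kappa$ contribute" does not follow factor by factor.

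The paper closes exactly this hole with a cancellation argument rather than a vanishing argument: using that $\psi_{N-1}$ is entire and satisfies the quasi-invariance conditions \eqref{psiNqinv} in its first argument (by self-duality), the specialisation of $\psi_{N-1}$ at the degenerate pole configuration is shown to be \emph{symmetric} under $n_{k_1}\leftrightarrow n_{k_2}$, while the surviving factor $\sinh(\tfrac{\pi}{a}(z_{k_1}-z_{k_2}))=\sinh(\tfrac{i\pi}{a}(n_{k_2}-n_{k_1}))$ is antisymmetric; hence the degenerate residues cancel in pairs (and, for more than two coinciding indices, under the induced permutation action). To repair your argument you therefore need to invoke the vanishing conditions of the BA-function $\psi_{N-1}$ — i.e.\ Prop.~\ref{Prop:qinv} and Prop.~\ref{Prop:varphiNpsi} at level $N-1$ — at this stage; without them the remainder term from non-injective $\kappa$ is not accounted for.
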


\begin{proof}
To ease the notation, we suppress dependence on the parameters $a$ and $m$ throughout the proof. Substituting  \eqref{PhiNRec} in the left-hand side of \eqref{PhiExp}, using \eqref{cW}--\eqref{cK} to reduce the size of the resulting expression and invoking the above statement after taking $N\to N-1$, we obtain
\begin{multline*}
\left(\frac{\prod_{n=1}^{m-1}2\sin(\pi n/a)}{a}\right)^{N-1}\prod_{j=1}^{N-1}2\sinh(\pi(y_j-y_N)) \frac{e^{\frac{2\pi i}{a} y_N\sum_{j=1}^Nx_j}}{(N-1)!}\\
\cdot\sum_{\sigma\in S_{N-1}}\int_{\mathbb{R}^{N-1}}dzW_{N-1}(z)\cS^\sharp_N(x,z)\varphi_{N-1}(\sigma(z),(y_1-y_N,\ldots,y_{N-1}-y_N)).
\end{multline*}
Thanks to the $S_{N-1}$-invariance of the domain of integration $\mathbb{R}^{N-1}$ as well as $W_{N-1}(z)$ and $\cS^\sharp_N(x,z)$, we can replace the sum over $\sigma\in S_{N-1}$ by a factor $(N-1)!$. By also substituting $2\sinh(\pi(y_j-y_N))=e^{\pi(y_j-y_N)}-e^{-\pi(y_j-y_N)}$, we thus obtain
\begin{multline}
\left(\frac{\prod_{n=1}^{m-1}2\sin(\pi n/a)}{a}\right)^{N-1}e^{\frac{2\pi i}{a} y_N\sum_{j=1}^Nx_j}\sum_{\de\in\{\pm 1\}^{N-1}}\de_1\cdots\de_{N-1}\\
\cdot \int_{\mathbb{R}^{N-1}}dzW_{N-1}(z)\cS^\sharp_N(x,z)e^{\pi\sum_{j=1}^{N-1}\de_j(y_j-y_N)}\varphi_{N-1}(z,(y_1-y_N,\ldots,y_{N-1}-y_N)).
\end{multline}
To rewrite the integral further we shall make use of the invariance property
\be
\label{varphiInvProp}
e^{\pi\sum_{j=1}^{N-1}\de_ju_j}\varphi_{N-1}(z,u) = \varphi_{N-1}(z-(ia/2)\de,u)\prod_{j=1}^{N-1}\delta_j^{m(N-2)}.
\ee
When $N-1=2$ it is a simple consequence of Prop.~\ref{Prop:varphi2Exp} and, using this as the basis for an induction argument, the validity of the formula in the general-$N$ case is readily inferred from \eqref{vphiExp2}--\eqref{Coeffs} and the observation that $W_{N-1}(z+(ia/2)\de)=W_{N-1}(z)$. Substituting \eqref{varphiInvProp} (with $u_j=y_j-y_N$) in the above integral, taking $z\to z+(ia/2)\de$, under which $W_{N-1}(z)$ is invariant, and noting that
\be
\cS^\sharp_N(x,z+(ia/2)\de) = \frac{(-1)^{mN(N-1)/2}\prod_{j=1}^{N-1}\delta_j^{mN}}{\prod_{j=1}^N\prod_{k=1}^{N-1}\prod_{n=0}^{m-1}2\sinh\big(\frac{\pi}{a}(x_j-z_k+i(m-2n-1)/2)\big)},
\ee
we arrive at
\begin{multline}
(-1)^{mN(N-1)/2}\left(\frac{\prod_{n=1}^{m-1}2\sin(\pi n/a)}{a}\right)^{N-1}e^{\frac{2\pi i}{a} y_N\sum_{j=1}^Nx_j}\sum_{\de\in\{\pm 1\}^{N-1}}\de_1\cdots\de_{N-1}\\
\cdot \int_{\R-\frac{ia}{2}\de_1}dz_1\cdots \int_{\R-\frac{ia}{2}\de_{N-1}}dz_{N-1}
\frac{W_{N-1}(z)\varphi_{N-1}(z,(y_1-y_N,\ldots,y_{N-1}-y_N))}{\prod_{j=1}^N\prod_{k=1}^{N-1}\prod_{n=0}^{m-1}2\sinh\big(\frac{\pi}{a}\big(x_j-z_k+\frac{i}{2}(m-2n-1)\big)\big)},
\end{multline}
where the sum of integrals amounts to
\be
\int_{\cC^{N-1}}dz\frac{W_{N-1}(z)\varphi_{N-1}(z,(y_1-y_N,\ldots,y_{N-1}-y_N))}{\prod_{j=1}^N\prod_{k=1}^{N-1}\prod_{n=0}^{m-1}2\sinh\big(\frac{\pi}{a}\big(x_j-z_k+\frac{i}{2}(m-2n-1)\big)\big)}
\ee
with $\cC$ the contour consisting of the lines $\R\pm ia/2$, traversed from right/left to left/right. By Cauchy's residue theorem, this one integral equals
\begin{multline*}
\sum_{\sigma\in S_N}\int_{\gamma_{\sigma(1)}}dz_1\cdots \int_{\gamma_{\sigma(N-1)}}dz_{N-1}\\
\cdot \frac{W_{N-1}(z)\varphi_{N-1}(z,(y_1-y_N,\ldots,y_{N-1}-y_N))}{\prod_{j=1}^N\prod_{k=1}^{N-1}\prod_{n=0}^{m-1}2\sinh\big(\frac{\pi}{a}\big(x_j-z_k+\frac{i}{2}(m-2n-1)\big)\big)} + R(x,y)
\end{multline*}
for some remainder term $R(x,y)$, resulting from residues of the integrand at points $z=(z_1,\ldots,z_{N-1})$ given by
\be
\label{zjSpec}
z_j = x_{i_j}+i(m-2n_j+1)/2,\ \ \ j = 1,\ldots,N-1,
\ee
where at least two $i_j$ coincide.

Referring back to \eqref{varphiN}, we find that it remains to prove that $R=0$. To this end, we take $N\to N-1$ in Prop.~\ref{Prop:varphiNpsi} and use the resulting formula to rewrite the integrand as
\be
C_{N-1}\frac{\psi_{N-1}(e^{-i\pi/a},m-1;iz,i(y_1-y_N,\ldots,y_{N-1}-y_N))\prod_{1\leq j<k\leq N-1}\sinh\big(\frac{\pi}{a}(z_j-z_k)\big)}{\prod_{j=1}^N\prod_{k=1}^{N-1}\prod_{n=0}^{m-1}2\sinh\big(\frac{\pi}{a}\big(x_j-z_k+\frac{i}{2}(m-2n-1)\big)\big)},
\ee
where $C_{N-1}$ is a constant (for fixed $m$ and $a$), whose value is of no consequence for the arguments below. From Prop.~4.4 in \cite{Cha02} (part (iii)), we know that $\psi_{N-1}(u,v)$ is an entire function. The presence of the factors $\sinh\big(\frac{\pi}{a}(z_j-z_k)\big)$ thus entails that (iterated) residues at points \eqref{zjSpec} such that $i_j=i_k$ and $n_j=n_k$ for some $1\leq j\neq k\leq N-1$ all vanish. In the remaining cases we have $n_j\neq n_k$ while $i_j=i_k$. Thanks to the $S_{N-1}$-invariance of the integrand (cf.~Lemma \ref{Lemma:inv}), we may and shall restrict attention to $j=1$ and $k=2$. Letting $\widetilde{\varphi}_{N-1}(z)=\psi_{N-1}(e^{-i\pi/a},m-1;iz,i(y_1-y_N,\ldots,y_{N-1}-y_N))$, $t=x_{i_1}=x_{i_2}$ and $s=n_1-n_2$, we observe that
\begin{multline*}
\tilde{\varphi}_{N-1}\big((t+i(m-2n_1-1)/2,t+i(m-2n_2-1)/2,\ldots)\big)\\
= \tilde{\varphi}_{N-1}\big((t+i(m-n_1-n_2-1)/2-is/2,t+i(m-n_1-n_2-1)/2+is/2,\ldots)\big).
\end{multline*}
Note that the self-duality of $\psi_{N-1}$ entails that $\widetilde{\varphi}_{N-1}(z)$ satisfies the vanishing conditions in Prop.~\ref{Prop:qinv} with $N\to N-1$ in $z$. Since each $n_j=0,\ldots,m-1$, we have $s=-m+1,\ldots m-1$. From the vanishing conditions for $\tilde{\varphi}$, we can thus infer that its specialisation at \eqref{zjSpec} with $i_1=i_2$ is invariant under the interchange $n_1\leftrightarrow n_2$. Moreover, we have
\be
\sinh(\pi(z_1-z_2)/a) = \sinh(i\pi(n_2-n_1)/a),
\ee
so that the same specialisation of $\varphi(z)W_{N-1}(z)$ is antisymmetric under $n_1\leftrightarrow n_2$. In fact, more generally, when $i_1=\cdots=i_M$, $M\geq 2$, coincide, we can, similarly, show that antisymmetry extends to any permutation of the corresponding $M$ distinct integers $n_1,\ldots,n_M$. (Essentially, we need only to decompose such a permutation in terms of (elementary) transpositions and appeal to the above discussion.) Either way, we see that the residues corresponding to a fixed choice of indices $i_j$, at least two of which coincide, and integers $n_j$, fixed up to permutations among the ones corresponding to equal $i_j$, cancel (in pairs) and consequently that $R=0$.
\end{proof}

We proceed to show that $\varphi_N$ \eqref{varphiN} essentially amounts to the BA-function reviewed above. This requires that we establish two things: the relevant series expansion and vanishing properties.

\begin{proposition}
\label{Prop:vphiExp}
We have
\be\label{vphiExp}
\varphi_N(x;y) = e^{\frac{2\pi i}{a}(x,y)}\sum_\nu \varphi_{N,\nu}(x)e^{\frac{2\pi}{a}(\nu,y)}
\ee
with the sum running over weight vectors of the form \eqref{nu} with $m\to m-1$ and where
\be\label{vphirho}
\begin{split}
\varphi_{N,\rho_N(m-1)}(x) &= \left(\frac{2a(-i)^m}{\prod_{n=1}^{m-1}2\sin(\pi n/a)}\right)^{N(N-1)/2}\\
&\quad \cdot \frac{1}{\prod_{1\leq j<k\leq N}\prod_{n=0}^{m-1}2\sinh\big(\frac{\pi}{a}(x_k-x_j+in)\big)}.
\end{split}
\ee
\end{proposition}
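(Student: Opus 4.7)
The plan is to argue by induction on $N$, with the base case $N=2$ already established in Prop.~\ref{Prop:varphi2Exp}. For the inductive step, I would substitute the expansion of $\varphi_{N-1}(z,u)$ given by the inductive hypothesis into the recursive definition \eqref{varphiN}, with $u=(y_1-y_N,\ldots,y_{N-1}-y_N)$, and then evaluate the $N-1$ contour integrals by iterated application of Cauchy's residue theorem. Each $z_k$-integral over $\gamma_k$ picks up the simple poles at $\tilde{z}_k(n_k)\equiv x_k+i(m-2n_k-1)/2$ with $n_k\in\{0,\ldots,m-1\}$, so the output is an $m^{N-1}$-fold sum (over the $n_k$) indexed additionally by the $\nu'$ appearing in the inductive expansion of $\varphi_{N-1}$.

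The second step is to keep track of the $y$-dependence. After extracting the outer factor $e^{\frac{2\pi i}{a}y_N\sum_j x_j}$ and the inductive factors $e^{\frac{2\pi i}{a}(z,u)}e^{\frac{2\pi}{a}(\nu',u)}$ evaluated at $z=\tilde{z}$, the combined exponential for a given term collapses to $e^{\frac{2\pi i}{a}(x,y)}e^{\frac{2\pi}{a}(\nu,y)}$, where
\[
\nu = \nu' + \sum_{k=1}^{N-1}\Bigl(\tfrac{m-1}{2}-l_{kN}\Bigr)(e_k-e_N),\qquad l_{kN}\equiv m-1-n_k.
\]
Using the inductive form $\nu'=\sum_{1\leq j<k\leq N-1}(\tfrac{m-1}{2}-l_{jk})(e_j-e_k)$, the bijection $(\nu',n_1,\ldots,n_{N-1})\leftrightarrow(l_{jk})_{1\leq j<k\leq N}$ exhibits $\nu$ in the form \eqref{nu} with $m\to m-1$, proving the shape of the expansion \eqref{vphiExp} and the range of the sum.

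For the explicit formula \eqref{vphirho}, I would observe that $\nu=\rho_N(m-1)$ corresponds to the unique choice $l_{jk}=0$ for all $j<k$, i.e.\ $\nu'=\rho_{N-1}(m-1)$ and $n_k=m-1$, hence $\tilde{z}_k=x_k-i(m-1)/2$ for every $k$. The corresponding residue contribution is assembled from: (i)~a factor $(-ia)^{N-1}$, coming from $(2\pi i)^{N-1}$ times $(-a/(2\pi))^{N-1}$ from the simple poles; (ii)~the inductive value $\varphi_{N-1,\rho_{N-1}(m-1)}(\tilde{z})$, which by $\tilde{z}_k-\tilde{z}_j=x_k-x_j$ provides the factor $\prod_{1\leq j<k\leq N-1}\prod_{n=0}^{m-1}[2\sinh(\pi(x_k-x_j+in)/a)]^{-1}$ together with the inductive prefactor raised to the power $(N-1)(N-2)/2$; (iii)~the ``diagonal'' denominator factors with $j=k$, $n\neq m-1$, evaluating to $\prod_{k=1}^{N-1}(2i)^{m-1}\prod_{l=1}^{m-1}\sin(\pi l/a)$; and (iv)~the ``boundary'' denominator factors with $j=N$, producing $\prod_{k=1}^{N-1}\prod_{l=0}^{m-1}2\sinh(\pi(x_N-x_k+il)/a)$, which is precisely the factor added when going from $\rho_{N-1}(m-1)$ to $\rho_N(m-1)$ in the target denominator.

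The main obstacle is the cancellation between the numerator product $N_1(\tilde{z})=\prod_{1\leq j\neq k\leq N-1}\prod_{n=0}^{m-1}2\sinh(\pi(x_j-x_k-in)/a)$ and the ``interior'' piece of the denominator $\prod_{1\leq j\neq k\leq N-1}\prod_{n=0}^{m-1}2\sinh(\pi(x_j-x_k+in)/a)$. Pairing each ordered pair $(j,k)$ with its reverse $(k,j)$, the factors with $n=0$ match trivially, while for $n\geq 1$ one exploits the identity $2\sinh(z-in)\cdot 2\sinh(-z-in)=-2\sinh(z-in)\cdot 2\sinh(z+in)$ to reorganise both numerator and denominator into a common symmetric form, yielding ratio $1$. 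Once this simplification is in place, the remaining factors combine cleanly into the stated ratio, and matching with the target formula reduces to the arithmetic identity relating $(-ia)^{N-1}/[(2i)^{(m-1)}\prod_{l=1}^{m-1}\sin(\pi l/a)]^{N-1}$ with the prefactor $\bigl(2a(-i)^m/\prod_{n=1}^{m-1}2\sin(\pi n/a)\bigr)^{N-1}$, completing the induction.
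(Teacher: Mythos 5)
Your proposal is correct and follows essentially the same route as the paper: induction on $N$ with the $N=2$ base case, substitution of the inductive expansion of $\varphi_{N-1}$ into the recursion \eqref{varphiN}, evaluation of the contour integrals by residues at $z_k=x_k+i(m-2n_k-1)/2$, the identification $l_{kN}=m-1-n_k$ to recover the weight lattice \eqref{nu} with $m\to m-1$, and extraction of the leading coefficient from the unique term $\nu'=\rho_{N-1}(m-1)$, $n_k=m-1$ (the paper handles your cancellation of $W_{N-1}$ against the interior denominator factors by invoking translation invariance of $W_{N-1}$ and $\varphi_{N-1,\rho_{N-1}(m-1)}$ under $x\to x+(t,\dots,t)$ together with \eqref{cW}, which is the same computation). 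No substantive differences.
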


\begin{proof}
By our induction assumption, we may invoke the proposition after taking $N\to N-1$. Substituting the resulting expansion \eqref{vphiExp} in \eqref{varphiN} and making use of Cauchy's residue theorem, we deduce
\begin{multline}\label{vphiExp2}
\varphi_N(x;y) = e^{\frac{2\pi i}{a} y_N\sum_{j=1}^Nx_j} \sum_{\nu^\prime} e^{\frac{2\pi}{a}(\nu^\prime,y)}\\
\cdot \int_{\underline{\gamma}}dz\frac{W_{N-1}(z)\varphi_{N-1,\nu^\prime}(z)e^{\frac{2\pi i}{a}(z,(y_1-y_N,\ldots,y_{N-1}-y_N))}}{\prod_{j=1}^N\prod_{k=1}^{N-1}\prod_{n=0}^{m-1}2\sinh\big(\frac{\pi}{a}(x_j-z_k+i(m-2n-1)/2)\big)}\\
= e^{\frac{2\pi i}{a}(x,y)} \sum_{\nu^\prime} e^{\frac{2\pi}{a}(\nu^\prime,(y_1,\ldots,y_{N-1}))}
\sum_{n_1=0}^{m-1}\cdots \sum_{n_{N-1}=0}^{m-1}C_{\nu^\prime;n_1,\ldots,n_{N-1}}(x)e^{\frac{2\pi}{a}\sum_{k=1}^{N-1}(n_k-(m-1)/2)(y_k-y_N)},
\end{multline}
with $\nu^\prime$ running over the set of weight vectors obtained after taking $N\to N-1$ and $m\to m-1$ in \eqref{nu}, and where
\begin{multline}
\label{Coeffs}
C_{v^\prime;n_1,\ldots,n_{N-1}}(x)\\
= (-ia)^{N-1}\frac{W_{N-1}(x_1+i(m-2n_1-1)/2,\ldots,x_{N-1}+i(m-2n_{N-1}-1)/2)}{\prod_{k=1}^{N-1}\prod_{n^\prime\neq n_k}2\sinh\big(\frac{i\pi}{a}(n_k-n^\prime)\big)}\\
\cdot \frac{\varphi_{N-1,\nu^\prime}(x_1+i(m-2n_1-1)/2,\ldots,x_{N-1}+i(m-2n_{N-1}-1)/2)}{\prod_{k=1}^{N-1}\prod_{j\neq k}\prod_{n^\prime=0}^{m-1}2\sinh\big(\frac{\pi}{a}(x_j-x_k+i(n_k-n^\prime)\big)}
\end{multline}
and we have used that $(\nu^\prime,(y_N,\ldots,y_N))=0$, since $\nu^\prime$ is a linear combination of vectors $e_j-e_k$ with $1\leq j<k\leq N-1$. Setting $l_{j,N-1}=m-1-n_j=0,\ldots,m-1$, we get the equality
\begin{multline}
\sum_{1\leq j<k\leq N-1}\left(\frac{m-1}{2}-l_{jk}\right)(e_j-e_k) + \sum_{j=1}^{N-1}\left(n_j-\frac{m-1}{2}\right)(e_j-e_N)\\
= \sum_{1\leq j<k\leq N}\left(\frac{m-1}{2}-l_{jk}\right)(e_j-e_k)
\end{multline}
and the validity of \eqref{vphiExp} with $\nu$ from \eqref{nu} clearly follows.

Since the only term in \eqref{vphiExp2} that contributes to $\varphi_{N,\rho_{N}(m-1)}(x)$ corresponds to $\nu^\prime=\rho_{N-1}(m-1)$ and $n_1=\cdots=n_{N-1}=m-1$, we have
\begin{multline}
\varphi_{N,\rho_N(m-1)}(x) = \left(\frac{a(-i)^m}{\prod_{n=1}^{m-1}2\sin(\pi n/a\big)}\right)^{N-1}\\
\cdot \frac{W_{N-1}(x)\varphi_{N-1,\rho_{N-1}(m-1)}(x)}{\prod_{k=1}^{N-1}\prod_{j\neq k}\prod_{n^\prime=0}^{m-1}2\sinh\big(\frac{\pi}{a}(x_j-x_k+i(m-1-n^\prime)\big)},
\end{multline}
where we have used the fact that both $W_{N-1}(x)$ and $\varphi_{N-1,\rho_{N-1}(m-1)}$ are invariant under translations $x\to x+(t,\ldots,t)$ for any $t\in\C$. Using \eqref{cW} and \eqref{vphirho} for $N\to N-1$, we easily arrive at the right-hand side of \eqref{vphirho}.
\end{proof}

To establish quasi-invariance, we make use of the fact that $\varphi_N$ is $S_N$-invariant in the sense of the following lemma.

\begin{lemma}
\label{Lemma:inv}
We have
\be
\varphi_N(\sigma(x),\sigma(y)) = (-)^{|\sigma|}\varphi_N(x,y),\ \ \ \sigma\in S_N.
\ee
\end{lemma}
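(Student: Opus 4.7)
The lemma will be proved by induction on $N$. The base case $N=2$ is a direct calculation: from \eqref{varphi2l} one checks that $\varphi_{2,l}(\tau(x)) = -\varphi_{2,m-1-l}(x)$ for the swap $\tau$, and reindexing $l \mapsto m-1-l$ in \eqref{varphi2Exp} then yields $\varphi_2(\tau(x),\tau(y)) = -\varphi_2(x,y)$. For the inductive step, since $S_N$ is generated by the adjacent transpositions $\tau_j = (j,j+1)$, $j = 1, \ldots, N-1$, I need only verify the sign flip under each $\tau_j$ acting diagonally, and the argument naturally splits into two cases according to whether $\tau_j$ fixes $N$.

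For $j \leq N-2$ (so that $\tau_j$ belongs to the subgroup of $S_N$ fixing $N$), I perform the change of integration variable $z \mapsto \tau_j(z)$ in \eqref{varphiN} applied to $\varphi_N(\tau_j(x),\tau_j(y))$. The contour product $\gamma_{\tau_j(1)}\times\cdots\times\gamma_{\tau_j(N-1)}$, which is the one attached to $\varphi_N(\tau_j(x),\cdot)$ since each $\gamma_k$ is tied to the pole $(\tau_j(x))_k + i(m-2n-1)/2$, is brought back to the standard $\underline\gamma = \gamma_1\times\cdots\times\gamma_{N-1}$ by the substitution. The prefactor $\exp(2\pi i y_N \sum_\ell x_\ell/a)$ is unchanged because $y_N$ and $\sum_\ell x_\ell$ are fixed by $\tau_j$ when $j \leq N-2$; both the denominator $\prod_{j',k',n} 2\sinh(\pi(x_{j'}-z_{k'}+\ldots)/a)$ and the $z$-symmetric factor $\prod_{k \neq l,n} 2\sinh(\pi(z_k-z_l-in)/a)$ are invariant under the double-product relabeling. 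The $y$-argument $((\tau_j y)_k - y_N)_k$ of $\varphi_{N-1}$ coincides with $\tau_j(v)$, where $v_k = y_k - y_N$, so after the $z$-substitution the recursive factor becomes $\varphi_{N-1}(\tau_j(z),\tau_j(v))$, which by the induction hypothesis equals $-\varphi_{N-1}(z,v)$. Collecting, $\varphi_N(\tau_j(x),\tau_j(y)) = -\varphi_N(x,y)$.

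The transposition $\tau_{N-1} = (N-1,N)$ is the main obstacle, since the formula \eqref{varphiN} treats $y_N$ asymmetrically: under $\tau_{N-1}$ the shifted $y$-argument of $\varphi_{N-1}$ becomes $(y_1-y_{N-1},\ldots,y_{N-2}-y_{N-1},y_N-y_{N-1})$, which is not a permutation of $v$, and the direct change-of-variable argument fails. My plan here is to combine the diagonal $S_{N-1}$-antisymmetry just obtained with Prop.~\ref{Prop:PhiNExp}: since $\Phi_N$ is symmetric in $y$ (being the joint eigenfunction $J_N$ of the commuting $A_{r,\delta}$'s with symmetric eigenvalues $S_r(e^{\pi y/a_\delta})$), the right-hand side of \eqref{PhiExp} is antisymmetric in $y$, whence $\sum_{\sigma\in S_N}\varphi_N(\sigma(x),\tau_{N-1}(y)) = -\sum_\sigma \varphi_N(\sigma(x),y)$. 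Decomposing $S_N$ into right cosets of $S_{N-1}$ and rewriting each $\varphi_N(\rho(w),y)$ as $\mathrm{sgn}(\rho)\varphi_N(w,\rho^{-1}(y))$ for $\rho \in S_{N-1}$ by Case A, one obtains a linear system in the $N$ quantities $\varphi_N((i,N)(x),y)$, $i=1,\ldots,N$, whose consistency forces $\varphi_N(\tau_{N-1}(x),\tau_{N-1}(y)) = -\varphi_N(x,y)$. A conceptually cleaner but logically inadmissible alternative is to invoke Prop.~\ref{Prop:varphiNpsi}, which identifies $\varphi_N$ with $\psi_N(ix,iy)$ divided by the product $\prod_{j<k}\prod_{n=-m+1}^{m-1}2\sinh(\pi(x_k-x_j+in)/a)$: the diagonal $S_N$-antisymmetry of $\varphi_N$ then follows at once from the diagonal $S_N$-invariance of the BA-function $\psi_N$ together with the antisymmetric transformation of the denominator.
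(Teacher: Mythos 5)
Your base case and your Case A (the transpositions $\tau_j$ with $j\le N-2$) are fine and coincide with the paper's argument: the paper likewise disposes of $\sigma\in S_{N-1}$ in one line via the change of variables $z\mapsto\sigma(z)$ in \eqref{varphiN} together with the induction hypothesis, and all of the real work is in the transposition $\sigma_{N-1}=(N-1,N)$. For that case the paper proceeds differently from you: it computes the leading coefficient $\varphi_{N,\sigma_{N-1}(\rho_N(m-1))}(x)$ directly from \eqref{vphiExp2}--\eqref{Coeffs} and checks it equals $\varphi_{N,\rho_N(m-1)}(\sigma_{N-1}(x))$ up to sign; it then shows that $\varphi_N(x,y)$ and $-\varphi_N(\sigma_{N-1}(x),\sigma_{N-1}(y))$ both satisfy the $y$-side Macdonald difference equation $D_N(y)\varphi=\varphi\sum_je^{2\pi x_j/a}$ (extracted from \eqref{PhiExp} using the \emph{linear independence} of the functions $\varphi_N(\sigma(x),\cdot)$, $\sigma\in S_N$, supplied by Prop.~\ref{Prop:vphiExp}), and concludes by the uniqueness of asymptotically free solutions with prescribed leading coefficient (Noumi--Shiraishi).

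Your treatment of $\tau_{N-1}$ has a genuine gap at precisely the point where the paper invokes linear independence. The identity $\sum_{\sigma\in S_N}\varphi_N(\sigma(x),\tau_{N-1}(y))=-\sum_{\sigma}\varphi_N(\sigma(x),y)$ is a \emph{single} functional equation; after rewriting via Case A the left-hand side involves the $N!$ distinct functions $y\mapsto\varphi_N(c_i(x),\rho^{-1}(y))$, not just the $N$ quantities $\varphi_N((i,N)(x),y)$ you name, and ``consistency of a linear system'' in those $N$ unknowns is not a step one can carry out: without an argument separating the summands, one equation cannot isolate individual terms of a sum. The route is salvageable --- expanding every term in the exponentials $e^{2\pi i(\sigma(x)-i\nu,y)/a}$ via Prop.~\ref{Prop:vphiExp}, noting these are linearly independent for generic real $x$ with distinct entries, and matching coefficients yields $\varphi_{N,\tau(\nu)}(\tau(x))=-\varphi_{N,\nu}(x)$ and hence the claim --- but that linear-independence/coefficient-matching step is the missing idea, and it is exactly the ingredient the paper makes explicit. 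Two further cautions: your parenthetical justification of the $y$-symmetry of $\Phi_N$ is not valid as stated (a joint eigenfunction with $S_N$-symmetric eigenvalues need not be symmetric in $y$; the symmetry is a nontrivial known property of $J_N$ that must be cited, e.g.\ from \cite{HR14} or via the duality proved in \cite{BDKK24b}), and you are right that invoking Prop.~\ref{Prop:varphiNpsi} would be circular, since in the induction it is established only after Lemma~\ref{Lemma:inv}.
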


\begin{proof}
From \eqref{varphiN} and the induction assumption, we see immediately that the statement holds true whenever $\sigma\in S_{N-1}$. Hence, we need only to consider the special case $\sigma=\sigma_{N-1}$. By a direct computation, similar to the one in the proof of Proposition \ref{Prop:vphiExp}, we obtain
\be
\begin{split}
\varphi_{N,\sigma_{N-1}(\rho_N(m-1))}(x) &= - \left(\frac{a(-i)^m}{\prod_{n=1}^{m-1}2\sin(\pi n/a)}\right)^{N(N-1)/2}\\
&\quad \cdot \frac{1}{\prod_{k=1}^N\prod_{j=1}^{\min(k,N-1)-1}\prod_{n=0}^{m-1}2\sinh\big(\frac{\pi}{a}(x_k-x_j+in\big)}\\
&\quad \cdot \frac{1}{\prod_{n=0}^{m-1}2\sinh\big(\frac{\pi}{a}(x_{N-1}-x_N+in\big)}\\
&= -\varphi_{N,\rho_N(m-1)}(\sigma_{N-1}(x)).
\end{split}
\ee
(Note that only the term in \eqref{vphiExp2} corresponding to $\nu^\prime=\rho_{N-1}$, $n_1=\cdots=n_{N-2}=m-1$ and $n_{N-1}=0$ contributes.) It follows that $\varphi_N(x,y)$ and $-\varphi_N(\sigma_{N-1}(x),\sigma_{N-1}(y))$ are both (finite) series of the form
\be
e^{\frac{2\pi i}{a}(x-i\rho_N(m-1),y)}f_N(x,y),\ \ \ f_N(x,y) = \Gamma_0(x)+\sum_{\nu>0}\Gamma_\nu(x)e^{-\frac{2\pi}{a}(\nu,y)},
\ee
where $\Gamma_0=\varphi_{N,\rho_N(m-1)}$ and $\nu>0$ means that $\nu=\sum_{1\leq j<k\leq N}l_{jk}(e_j-e_k)$ with $l_{jk}\geq 0$ and not all equal to zero. From Proposition \ref{Prop:vphiExp}, we get that the functions $\varphi_N(\sigma(x),y)$, $\sigma\in S_N$, form a linearly independent set; and,
since the pre-factor $\prod_{j<k}2\sinh(\pi(y_j-y_k))$ in \eqref{PhiExp} is (anti)periodic under shifts $y_j\to y_j-i$ and the Macdonald operators $D_N^r$ are $S_N$-invariant, it follows that $\varphi_N(x,y)$ and $-\varphi_N(\sigma_{N-1}(x),\sigma_{N-1}(y))$ satisfy a difference equation
\be
D_N(y)\varphi(x,y) = \varphi(x,y)\cdot \text{const.}\sum_{j=1}^N e^{\frac{2\pi}{a}x_j},\ \ \ D_N(y)\equiv D^1_N(q^{i2y};q^2,q^{2(1-m)}).
\ee
For the corresponding series $f_N$, this translates into a difference equation $L_N(y)f(x,y)=f(x,y)\cdot \text{const.}\sum_{j=1}^Ne^{\frac{2\pi}{a}x_j}$ given by the difference operator
\be
\begin{split}
L_N(y) &\equiv e^{-\frac{2\pi i}{a}(x-i\rho_N(m-1),y)} D_N(y) e^{\frac{2\pi i}{a}(x-i\rho_N(m-1),y)}\\
&= \sum_{j=1}^Ne^{\frac{2\pi}{a}x_j}\prod_{k<j}\frac{1-e^{\frac{2\pi}{a}(y_j-y_k+i(m-1))}}{1-e^{\frac{2\pi}{a}(y_j-y_k)}}\cdot \prod_{k>j}\frac{1-e^{\frac{2\pi}{a}(y_k-y_j-i(m-1))}}{1-e^{\frac{2\pi}{a}(y_k-y_j)}}\cdot e^{-i\partial_{y_j}}.
\end{split}
\ee
By expanding the coefficients in power series in $e^{\frac{2\pi}{a}(y_k-y_j)}$, $1\leq j<k\leq N$, this, together with the leading coefficient $\Gamma_0$, is readily seen to uniquely determine $f_N$. This essentially amounts to the uniqueness result in Theorem 2.3 in \cite{NS12}, wherein a detailed proof can be found.
\end{proof}

\begin{proposition}
\label{Prop:qinv}
For all $1\leq j<k\leq N$ and $s=1,\ldots,m-1$, we have the vanishing property
\be
\varphi_N\left(x;y+\frac{is}{2}(e_j-e_k)\right) = \varphi_N\left(x;y-\frac{is}{2}(e_j-e_k)\right),\ \ \ e^{\frac{2\pi}{a}(y_j-y_k)} = 1.
\ee
\end{proposition}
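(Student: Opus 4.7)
\emph{Plan.} The strategy is to complete the induction on $N$ underlying this subsection; the $N=2$ base case has already been established above. For $N \ge 3$, I would split the statement according to whether $k \le N-1$ or $k = N$.

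\emph{Subcase $k \le N-1$.} Inspecting \eqref{varphiN}, the function $\varphi_N(x,y)$ depends on $y$ only through the prefactor $e^{\frac{2\pi i}{a} y_N \sum_l x_l}$ and through the $(N-1)$-tuple $u=(y_1-y_N,\ldots,y_{N-1}-y_N)$, which enters as the second argument of the factor $\varphi_{N-1}(z,u)$ in the integrand. A shift $y \mapsto y \pm \tfrac{is}{2}(e_j-e_k)$ with $j<k \le N-1$ leaves $y_N$ invariant and descends to the shift $u \mapsto u \pm \tfrac{is}{2}(e_j-e_k)$ in $\R^{N-1}$; moreover $u_j-u_k = y_j-y_k$, so the specialisation $e^{\frac{2\pi}{a}(y_j-y_k)}=1$ coincides with the one relevant for $\varphi_{N-1}$. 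Applying the induction hypothesis pointwise in $z$ then makes the integrand invariant under the shift at this specialisation, and hence so is $\varphi_N(x,y)$ itself.

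\emph{Subcase $k=N$.} Given $j \le N-1$, pick any $l \in \{1,\ldots,N-1\}\setminus\{j\}$ (possible since $N \ge 3$) and let $\tau = (l\ N) \in S_N$ be the transposition swapping $l$ and $N$, so that $\tau$ fixes $j$ and sends $e_j-e_l$ to $e_j-e_N$. Setting
\[
F^{(j,k)}(x,y) := \varphi_N(x, y+\tfrac{is}{2}(e_j-e_k)) - \varphi_N(x, y-\tfrac{is}{2}(e_j-e_k)),
\]
Lemma \ref{Lemma:inv} applied to each $\varphi_N$-term on the left-hand side gives $F^{(j,N)}(\tau(x),\tau(y)) = -F^{(j,l)}(x,y)$. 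Since $\tilde y := \tau(y)$ satisfies $\tilde y_j - \tilde y_N = y_j - y_l$, the specialisation $e^{\frac{2\pi}{a}(\tilde y_j - \tilde y_N)}=1$ is equivalent to $e^{\frac{2\pi}{a}(y_j-y_l)}=1$, at which point the previous subcase yields $F^{(j,l)}(x,y)=0$. Relabelling gives $F^{(j,N)}(x,y)=0$ at $e^{\frac{2\pi}{a}(y_j-y_N)}=1$, closing the induction.

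The only step requiring real care is the bookkeeping in the second subcase: one must verify that the choice $\tau(j)=j$, $\tau(l)=N$ transports both the shift direction $e_j-e_l$ and the specialisation condition correctly under $\tau$. In contrast to the delicate residue/Liouville argument needed at the $N=2$ base case, no such computation is required at the inductive step; the $S_N$-antisymmetry established in Lemma \ref{Lemma:inv} does essentially all of the work.
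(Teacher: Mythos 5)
Your proof is correct and rests on exactly the same two ingredients as the paper's: the $S_N$-antisymmetry of Lemma \ref{Lemma:inv} together with the induction hypothesis applied to $\varphi_{N-1}$ through the recursion \eqref{varphiN}, whose $y$-dependence sits only in the prefactor $e^{\frac{2\pi i}{a}y_N\sum_j x_j}$ and in the argument $(y_1-y_N,\ldots,y_{N-1}-y_N)$ of $\varphi_{N-1}$. The paper merely orders the reduction the other way around (using the lemma first to reduce to $(j,k)=(1,2)$ and then invoking the induction assumption), so the argument is essentially identical.
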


\begin{proof}
Thanks to Lemma \ref{Lemma:inv}, it suffices to establish the vanishing property when $j=1$ and $k=2$; and, in this case, it follows immediately from \eqref{varphiN} and our induction assumption.
\end{proof}

Comparing Props.~\ref{Prop:vphiExp} and \ref{Prop:qinv} with the properties \eqref{psiN} and \eqref{psiNqinv}--\eqref{psiNrhom}, which uniquely characterises the self-dual BA-function $\psi_N$, we find the following result.

\begin{proposition}
\label{Prop:varphiNpsi}
Given $m\in\mathbb{Z}_+$ and $a>m-1$, we have
\be
\begin{split}
\varphi_N(a,m;x,y) &= \left(\frac{a(-i)^m}{\prod_{n=1}^{m-1}2\sin(\pi n/a)}\right)^{N(N-1)/2}\\
&\quad \cdot \frac{\psi_N(\exp(-i\pi/a),m-1;ix,iy)}{\prod_{1\leq j<k\leq N}\prod_{n=-m+1}^{m-1}2\sinh\big(\frac{\pi}{a}(x_k-x_j+in)\big)}.
\end{split}
\ee
\end{proposition}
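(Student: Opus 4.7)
The plan is to invoke the uniqueness theorem for the self-dual BA-function recalled in Section~2.2 (which applies since $q=e^{-i\pi/a}$ is, by assumption, not a root of unity): any function of the form \eqref{psiN} over weight vectors \eqref{nu} with $m\to m-1$ that satisfies the vanishing conditions \eqref{psiNqinv} and the leading-coefficient normalisation \eqref{psiNrhom} must coincide with $\psi_N(q,m-1;\cdot,\cdot)$. Accordingly, I would define
\be
\tilde\psi_N(ix,iy) := C_N^{-1} \prod_{1\leq j<k\leq N}\prod_{n=-m+1}^{m-1} 2\sinh\!\bigl(\tfrac{\pi}{a}(x_k-x_j+in)\bigr)\,\varphi_N(a,m;x,y),
\ee
where $C_N$ is the constant appearing in the proposition, and verify the three defining properties for $\tilde\psi_N$ at parameter value $m-1$.

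The series form and the vanishing are essentially immediate from the preceding results. Since $q=e^{-i\pi/a}$ yields $q^{2(ix,iy)}=e^{\frac{2\pi i}{a}(x,y)}$ and $q^{2(\nu,iy)}=e^{\frac{2\pi}{a}(\nu,y)}$, the expansion \eqref{vphiExp} from Prop.~\ref{Prop:vphiExp} already has the required form \eqref{psiN}, with weights as in \eqref{nu} for parameter $m-1$; multiplication by the $x$-dependent prefactor clearly preserves this structure in $y$. For the vanishing, the identification $q^{2i(y_j-y_k)}=1 \Leftrightarrow e^{\frac{2\pi}{a}(y_j-y_k)}=1$ and the rewriting of the shift $iy\to iy\pm s(e_j-e_k)/2$ in terms of $y$ turn the condition \eqref{psiNqinv} (with $m\to m-1$) into precisely Prop.~\ref{Prop:qinv}, and the $x$-only prefactor is once more irrelevant.

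The one genuine calculation is the normalisation match. Using $q^z-q^{-z}=2\sinh(\pi(w-in)/a)$ for $z=n+iw$, the required leading coefficient
\be
\psi_{N,\rho_N(m-1)}(ix)=\prod_{1\leq j<k\leq N}\prod_{n=1}^{m-1}[n+i(x_k-x_j)]
\ee
becomes $\prod_{j<k}\prod_{n=-m+1}^{-1} 2\sinh(\pi(x_k-x_j+in)/a)$ after $n\to -n$. On the other hand, the coefficient extracted from $\tilde\psi_N$ is, by \eqref{vphirho}, the ratio of $C_N^{-1}\prod_{j<k}\prod_{n=-m+1}^{m-1}2\sinh(\pi(x_k-x_j+in)/a)\cdot C_N$ to $\prod_{j<k}\prod_{n=0}^{m-1}2\sinh(\pi(x_k-x_j+in)/a)$, so the constants cancel and the $n=0,\ldots,m-1$ denominator kills the non-negative part of the numerator product, leaving exactly the same expression $\prod_{j<k}\prod_{n=-m+1}^{-1}2\sinh(\pi(x_k-x_j+in)/a)$. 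Combining the three checks with the uniqueness theorem gives $\tilde\psi_N(ix,iy)=\psi_N(e^{-i\pi/a},m-1;ix,iy)$, which upon rearrangement is precisely the stated formula. The main difficulty here is purely bookkeeping — carefully tracking the change of variables $(x,y)\mapsto(ix,iy)$, the $[z]\leftrightarrow 2\sinh$ dictionary, and ensuring that the constant $C_N$ is chosen so that the leading coefficients align — rather than anything conceptual, since all the structural content has already been encoded in Props.~\ref{Prop:vphiExp} and~\ref{Prop:qinv}.
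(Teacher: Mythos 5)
Your argument is correct and takes essentially the same route as the paper, whose proof of Prop.~\ref{Prop:varphiNpsi} consists precisely of comparing Props.~\ref{Prop:vphiExp} and \ref{Prop:qinv} with the defining properties \eqref{psiN}, \eqref{psiNqinv} and \eqref{psiNrhom} and invoking uniqueness of the normalised self-dual BA-function. Your explicit normalisation check, where $C_N$ cancels against the constant in \eqref{vphirho} and the $n=0,\ldots,m-1$ factors cancel to leave $\prod_{j<k}\prod_{n=-m+1}^{-1}2\sinh\bigl(\tfrac{\pi}{a}(x_k-x_j+in)\bigr)$, is exactly the bookkeeping the paper leaves implicit.
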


Finally, by combining Props.~\ref{Prop:PhiNExp} and \ref{Prop:varphiNpsi}, we arrive at the statement in Thm.~\ref{Thm:PhiN} for arbitrary $N$; and, by using the above expression to express $\varphi_N$ and $\varphi_{N-1}$ in \eqref{varphiN} in terms of $\psi_N$ and $\psi_{N-1}$, respectively, we easily verify the claim in Prop.~\ref{Prop:psiN}.

\section*{Acknowledgements}
I would like to thank N.~Belousov for helpful comments. Financial support from the Swedish Research Council (Project-id 2024-05649\_VR) is gratefully acknowledged.

\bibliographystyle{amsalpha}

\end{document}